%% This is file `elsarticle-template-1a-num.tex',
%%
%% Copyright 2009 Elsevier Ltd
%%
%% This file is part of the 'Elsarticle Bundle'.
%% ---------------------------------------------
%%
%% It may be distributed under the conditions of the LaTeX Project Public
%% License, either version 1.2 of this license or (at your option) any
%% later version.  The latest version of this license is in
%%    http://www.latex-project.org/lppl.txt
%% and version 1.2 or later is part of all distributions of LaTeX
%% version 1999/12/01 or later.
%%
%% The list of all files belonging to the 'Elsarticle Bundle' is
%% given in the file `manifest.txt'.
%%
%% Template article for Elsevier's document class `elsarticle'
%% with numbered style bibliographic references
%%
%% $Id: elsarticle-template-1a-num.tex 151 2009-10-08 05:18:25Z rishi $
%% $URL: http://lenova.river-valley.com/svn/elsbst/trunk/elsarticle-template-1a-num.tex $
%%
\documentclass[preprint,12pt]{elsarticle}

%% Use the option review to obtain double line spacing T
%% \documentclass[preprint,review,12pt]{elsarticle}

%% Use the options 1p,twocolumn; 3p; 3p,twocolumn; 5p; or 5p,twocolumn
%% for a journal layout:
%% \documentclass[final,1p,times]{elsarticle}
%% \documentclass[final,1p,times,twocolumn]{elsarticle}
%% \documentclass[final,3p,times]{elsarticle}
%% \documentclass[final,3p,times,twocolumn]{elsarticle}
%% \documentclass[final,5p,times]{elsarticle}
%% \documentclass[final,5p,times,twocolumn]{elsarticle}

%% if you use PostScript figures in your article
%% use the graphics package for simple commands
%% \usepackage{graphics}
%% or use the graphicx package for more complicated commands
%% \usepackage{graphicx}
%% or use the epsfig package if you prefer to use the old commands
%% \usepackage{epsfig}

%% The amssymb package provides various useful mathematical symbols
\usepackage{amssymb}
%% The amsthm package provides extended theorem environments
\usepackage{amsthm}
\usepackage{amsmath}
\usepackage{graphics}
\usepackage{epsfig}
\usepackage{amssymb}
\usepackage{hyperref}
\usepackage{graphicx}
\usepackage{subfigure}
\usepackage{color}
%for factorial notation
\usepackage{tikz}
\usetikzlibrary{patterns}
\usepackage[ruled,vlined]{algorithm2e}
%\usetikzlibrary{calc}
\usepackage{hyperref}
%for factorial notation
\usepackage{a4wide}
\usepackage{amsmath}
\usepackage{amsfonts}
\usepackage{enumerate}
%\newdefinition{def}{Definition}
%\newtheorem{pf}[]{pf}
%\newproof{pf}{\bf{Proof:}}
%\newcommand{\qed}{\hfill \rule{2mm}{2mm}}

\newtheorem*{theoremaux}{Theorem \theoremauxnum}
\gdef\theoremauxnum{1}

\newtheorem{lemma}{\bf Lemma}[section]
\newtheorem{note}{\bf Note}[section]

\newtheorem{theorem}{\bf Theorem}[section]

\newtheorem{corollary}[lemma]{\bf Corollary}
\newtheorem{definition}{\bf Definition}[section]

%\newcommand{\bd}{\begin{definition}}
%\newcommand{\ed}{\end{definition}}

%% The lineno packages adds line numbers. Start line numbering with
%% \begin{linenumbers}, end it with \end{linenumbers}. Or switch it on
%% for the whole article with \linenumbers after \end{frontmatter}.
%% \usepackage{lineno}

%% natbib.sty is loaded by default. However, natbib options can be
%% provided with \biboptions{...} command. Following options are
%% valid:

%%   round  -  round parentheses are used (default)
%%   square -  square brackets are used   [option]
%%   curly  -  curly braces are used      {option}
%%   angle  -  angle brackets are used    <option>
%%   semicolon  -  multiple citations separated by semi-colon
%%   colon  - same as semicolon, an earlier confusion
%%   comma  -  separated by comma
%%   numbers-  selects numerical citations
%%   super  -  numerical citations as superscripts
%%   sort   -  sorts multiple citations according to order in ref. list
%%   sort&compress   -  like sort, but also compresses numerical citations
%%   compress - compresses without sorting
%%
%% \biboptions{comma,round}

% \biboptions{}

\journal{~}

\begin{document}
	
	\begin{frontmatter}
		
		%% Title, authors and addresses
		
		%% use the tnoteref command within \title for footnotes;
		%% use the tnotetext command for the associated footnote;
		%% use the fnref command within \author or \address for footnotes;
		%% use the fntext command for the associated footnote;
		%% use the corref command within \author for corresponding author footnotes;
		%% use the cortext command for the associated footnote;
		%% use the ead command for the email address,
		%% and the form \ead[url] for the home page:
		%%\author[rvt]{C.V. ̃Radhakrishnan\corref{cor1}\fnref{fn1}}
		%% \title{Title\tnoteref{label1}}
		%% \tnotetext[label1]{}
		\author{Sucharita Biswas}
		\ead{biswas.sucharita56@gmail.com}
		\address{Department of Mathematics,\\ Presidency University, Kolkata, India} 
		
		%\author{}
		\author{Angsuman Das\corref{cor1}}
		\ead{angsuman.maths@presiuniv.ac.in}
		\address{Department of Mathematics,\\ Presidency University, Kolkata, India} 
		\cortext[cor1]{Corresponding author}
		
		\title{On a family of quasi-strongly regular Cayley graphs}

		\begin{abstract}
			In this paper, we construct a family of quasi-strongly regular Cayley graphs $\Gamma_H(G)$ which is defined on a finite group $G$ with respect to a subgroup $H$ of $G$. We also compute its full automorphism group	and characterize various transitivity properties	of it.		
		\end{abstract}
		
		\begin{keyword}
			%% keywords here, in the form: keyword \sep keyword
			strongly regular graph \sep graph automorphism \sep Cayley graphs
			\MSC[2008] 05E30 \sep 05C25 \sep 20B25 \sep 05E18
		\end{keyword}
	\end{frontmatter}
	
	%%
	%% Start line numbering here if you want
	%%
	% \linenumbers
	\section{Introduction} 
	Strongly regular graphs (SRG) is an important family of graphs generating attention of lot of researchers both due its rich theoretical implications as well as applications to various other fields. Consequently various generalizations of strong regularity, like {\it Deza graphs} \cite{Deza_1994},\cite{Deza_1999}, {\it generalized strongly regular graphs} \cite{gen_SRG}, {\it quasi strongly regular graphs} (QSRG) \cite{golightly-haynsworth-sarvate},\cite{QSRG_2021},\cite{QSRG_2023}, has emerged over time. In this paper, we construct a family of QSRG Cayley graphs by generalizing a well-known construction of SRG's from groups.
	
	On the other hand, characterizing the automorphisms and other related parameters involving symmetries of graphs is a relevant area of research. A few of the latest work in this direction are \cite{1/2-trans} \cite{Pappus} \cite{generalized-andrasfai} \cite{Cubic} \cite{Kneser} etc. In this paper, we characterize the full automorphism group and other transitivity properties of the proposed family of graphs.
	
	We start with few definitions which already exist in the literature.
	
	\begin{definition}
		A Cayley graph $Cay(G,S)$ of a group $G$ with respect to an inverse-symmetric subset $S$ of $G$ is a graph with $G$ as the set of vertices and two distinct vertices $u,v \in G$ are adjacent in $Cay(G,S)$ if $uv^{-1} \in S$. $S$ is said to be the connection set for $Cay(G,S)$.
	\end{definition}

	\begin{definition}\cite{goldberg}
		A strongly regular graph with parameters $(n,k,a,c)$, denoted by $SRG(n,$ $k,a,c)$, is a ` $k$' regular graph on ` $n$' vertices such that any two adjacent vertices have ` $a$' many common neighbours and any non-adjacent vertices have ` $c$' many common neighbours. 
	\end{definition}

	\begin{definition}\cite{golightly-haynsworth-sarvate} 
		A quasi-strongly regular graph with parameters $(n,k,a;c_1,c_2, \ldots ,c_p)$, denoted by $QSRG(n,k,a;c_1,c_2, \ldots ,c_p)$, is a ` $k$' regular graph on ` $n$' vertices such that any two adjacent vertices have ` $a$' many common neighbours and any two non-adjacent vertices have ` $c_i$' many common neighbours for some $1 \leq i \leq p$, where $c_i$'s are distinct non-negative integers. Let the set of parameters $\{ c_1,c_2, \ldots ,c_p\}$ is called $c-$set of $QSRG(n,k,a;c_1,c_2, \ldots ,c_p)$.  
	\end{definition}
	
	Now, we are in a position to introduce the proposed graph.	
	\begin{definition}
		Let $G$ be a group of order $n\geq 5$, $H$ be a subgroup of G and $S=\{(g,e),(e,g),(g,g): g \in G\setminus H \}$. Define $\Gamma_H(G)$ to be the Cayley graph of $G\times G$ with respect to the connection set $S$, i.e., two vertices $(g_1,g_2)$, $(g_3,g_4)$ are adjacent if and only if $(g_1g_3^{-1}, g_2g_4^{-1}) \in S$.
	\end{definition}
	Note that if $H=G$, then $\Gamma_H(G)$ is a null graph and if $H=\{e\}$, then $\Gamma_H(G)$ is a well-known family of strongly regular graphs with the parameters $(n^2,3n-3,n,6)$ as shown in \cite{nica-book}(page 27, Theorem 3.22). In other cases, i.e., if $\{e\} < H < G$, it can be shown that $\Gamma_H(G)$ is not strongly regular. The main contribution of this paper is to prove that $\Gamma_H(G)$ is a QSRG by evaluating its parameters and to compute its automorphism group for all non-trivial proper subgroups $H$ of $G$ and investigate the transitivity of such graphs.
	
	\section{$\Gamma_H(G)$ is QSRG}\label{qsrg-section}
	In this section, we prove that $\Gamma_H(G)$ is quasi strongly regular graph and evaluate all its parameters, depending upon various choices of $H$. As $\Gamma_H(G)$ is a Cayley graph, i.e., it is vertex-transitive, we deal with the vertex $(e,e)$ and emphasize on the adjacent and non adjacent vertices of $(e,e)$ to count their number of common neighbours.    
	
	The vertex $(e,e)$ is adjacent to vertices of the form $(g,e)$, $(e,g)$ and $(g,g)$, where $ g \in G\setminus H$. In this section, we denote this neighbours as Type $1$, $2$ and $3$ respectively.
	
	\begin{theorem}
		For any subgroup $H$ of $G$, any two adjacent vertices of $\Gamma_H(G)$ has $|G|-2|H|+2$ many common neighbours, i.e., $a=|G|-2|H|+2$.
	\end{theorem}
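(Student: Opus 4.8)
The plan is to exploit vertex-transitivity (noted at the start of this section) to fix one endpoint of the edge at $(e,e)$ and count common neighbours with an adjacent vertex $v$. Since the neighbours of $(e,e)$ are exactly the Type $1$, $2$ and $3$ vertices, it suffices to treat $v$ of each type. Moreover, the coordinate-swap map $\sigma(g_1,g_2)=(g_2,g_1)$ is a graph automorphism: the connection set $S$ is invariant under swapping coordinates (it sends $(g,e)\leftrightarrow(e,g)$ and fixes $(g,g)$), and $\sigma$ fixes $(e,e)$ while interchanging Type $1$ and Type $2$ neighbours. Hence only $v=(a,e)$ (Type $1$) and $v=(a,a)$ (Type $3$), with $a\in G\setminus H$, need to be analysed in detail.

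For a fixed such $v$, I would search for all $w=(x,y)$ adjacent to both $(e,e)$ and $v$. Adjacency to $(e,e)$ forces $w$ to be itself of Type $1$, $2$ or $3$, so I split into three sub-cases and impose the extra constraint $wv^{-1}\in S$. For $v=(a,e)$ this constraint reads $(xa^{-1},y)\in S$, and for $v=(a,a)$ it reads $(xa^{-1},ya^{-1})\in S$; in each sub-case membership in $S$ reduces to a simple equation in $G$ that either pins $w$ down uniquely or leaves a one-parameter family of choices.

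The arithmetic heart of the count is the following coset fact: since $a\in G\setminus H$, the right coset $Ha$ is disjoint from $H$, hence $Ha\subseteq G\setminus H$; therefore the number of $g\in G\setminus H$ with $ga^{-1}\notin H$ equals $(|G|-|H|)-|H|=|G|-2|H|$. For $v=(a,e)$ this bulk term $|G|-2|H|$ comes from the Type $1$ common neighbours $w=(g,e)$ (requiring $g\notin H$ and $ga^{-1}\notin H$), while the Type $2$ and Type $3$ sub-cases each yield a single vertex, namely $(e,a^{-1})$ and $(a,a)$; summing gives $|G|-2|H|+2$. For $v=(a,a)$ the roles shift: the Type $3$ common neighbours supply the bulk term $|G|-2|H|$ and the Type $1$ and Type $2$ sub-cases contribute one vertex each, again totalling $|G|-2|H|+2$. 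In each sub-case I would also check that $w\neq(e,e)$ and $w\neq v$ hold automatically (the excluded configurations would force $g=e$ or $ga^{-1}=e$), so nothing is miscounted.

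The main obstacle is purely organizational: keeping the three-by-three case split straight and correctly discarding the impossible configurations, which are precisely those forcing $a=e$, $g=e$, or an element of $G\setminus H$ to equal $e$. Once the coset-counting fact is isolated, each sub-case is a one-line verification, and the uniform total $|G|-2|H|+2$ across the inequivalent types of $v$ is exactly what establishes that every edge has the single common-neighbour value $a=|G|-2|H|+2$.
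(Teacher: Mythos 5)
Your proposal is correct and follows essentially the same route as the paper: fix one endpoint at $(e,e)$ by vertex-transitivity, split the potential common neighbours by Type $1$, $2$, $3$, and obtain the bulk term $|G|-2|H|$ from the coset count plus exactly one common neighbour from each of the two remaining types. The only cosmetic difference is that you invoke the swap automorphism $\sigma$ to dispose of the Type $2$ endpoint, where the paper simply says ``similarly''; this is a clean way of making that step precise.
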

	
	\begin{proof}
		Let us consider the vertex $(e,e)$ in $\Gamma_H(G)$ and $(g,e)$ be a Type-$1$ neighbour of $(e,e)$. If $(g',e)$ is a common neighbour of $(e,e)$ and $(g,e)$, then $g,g' \notin H$ and $(g'g^{-1},e) \in S$, i.e., $g'g^{-1} \notin H$, i.e., $gH \neq g'H$. Thus $g' \in G \setminus \left( H \cup gH\right)$, i.e.,  we have $|G|-2|H|$ many choices for $g'$. 
		
		If $(e,g')$ is a common neighbour of $(e,e)$ and $(g,e)$, then $(g^{-1},g') \in S$, i.e., $g^{-1}=g'$. Thus we have only one choice for such a common neighbour, i.e., $(e,g^{-1})$.
		
		If $(g',g')$ is a common neighbour of $(e,e)$ and $(g,e)$, then $(g'g^{-1},g') \in S$, i.e., $g=g'$. In this case also, we have only one choice for such a common neighbour, i.e., $(g,g)$.
		
		Therefore $(e,e)$ and $(g,e)$ have $|G|-2|H|+2$ many common neigbours. 
		
		Similarly it can be proved that Type-$2$ and Type-$3$ neighbours of $(e,e)$  also have $|G|-2|H|+2$ many common neigbours with $(e,e)$. 
	\end{proof}
	
	Next we study the number of common neighbours of two non adjacent vertices, i.e., we will find the complete $c-$set for all subgroups $H$ of $G$ depending upon the nature of $H$. The complete result is given in Theorem \ref{mu-set}, which we prove next with the aid of some lemmas.
	
	\begin{theorem}\label{mu-set}
		Let $H$ be a non trivial proper subgroup of $G$ and $[G:H]=l$. 
		\begin{enumerate}
			\item If $l=2$ then $c-$set $=\{0,2,|G|-|H|\}$.
			\item If $l>2$ and $H$ is normal in $G$ then
			\begin{itemize}
				\item $|H|=2$ implies $c-$set $=\{2,6,|G|-|H|\}$.
				\item $|H|>2$ implies $c-$set $=\{0,2,6,|G|-|H|\}$.
			\end{itemize}
			\item If $l>2$ and $H$ is not normal in $G$  then
			\begin{itemize}
				\item $|H|=2$ implies $c-$set $=\{2,4,6,|G|-|H|\}$.
				\item $|H|>2$ implies $c-$set $=\{0,2,4,6,|G|-|H|\}$.
			\end{itemize}
		\end{enumerate}
	\end{theorem}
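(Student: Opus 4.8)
The plan is to exploit vertex-transitivity and count, for the fixed vertex $(e,e)$, the common neighbours it shares with each of its non-neighbours. First I would classify the non-neighbours: a vertex $(x,y)\neq(e,e)$ fails to lie in $S$ precisely when it is one of (I) $(e,h)$, (II) $(h,e)$, (III) $(h,h)$ with $h\in H\setminus\{e\}$, or (IV) a generic pair with $x\neq e$, $y\neq e$ and $x\neq y$. Since $(a,b)\mapsto(b,a)$ fixes $S$ setwise and is an automorphism of $G\times G$, it is a graph automorphism, so families (I) and (II) behave identically and it suffices to treat (II), (III) and (IV).

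The core computation is a master counting formula. A common neighbour $(u,v)$ of $(e,e)$ and $(x,y)$ must satisfy $(u,v)\in S$ and $(ux^{-1},vy^{-1})\in S$ simultaneously. I would run through the $3\times 3$ combinations of membership types (the type of $(u,v)$ against the type of $(ux^{-1},vy^{-1})$), solve each resulting pair of equations for $(u,v)$, and verify that the candidate vertices so produced are pairwise distinct. For families (I)--(III) the surviving contributions are exactly those requiring a coordinate to equal $e$ or the two coordinates to coincide, and they total $|G|-|H|$. For a generic pair (family (IV)) only six contributions survive, and the count $N(x,y)$ depends solely on whether $x,y\in H$ and on the (in)equalities of the left cosets $xH,yH$ and right cosets $Hx,Hy$; concretely I expect, with $\mathbf{1}[\cdot]$ denoting an indicator,
\[
N(x,y)=2\,\mathbf{1}[x\notin H,\ y\notin H]+\mathbf{1}[y\notin H,\ xH\neq yH]+\mathbf{1}[x\notin H,\ xH\neq yH]+\mathbf{1}[y\notin H,\ Hx\neq Hy]+\mathbf{1}[x\notin H,\ Hx\neq Hy].
\]
Evaluating this on family (IV) gives: $N=0$ when $x,y\in H$; $N=2$ when exactly one of $x,y$ lies in $H$; and, when $x,y\notin H$, the value $2$, $4$ or $6$ according as both coset equalities $xH=yH,\ Hx=Hy$ hold, exactly one holds, or neither holds. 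Hence the only possible entries of the $c$-set are $0,2,4,6$ and $|G|-|H|$.

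The delicate part is deciding which of these values are actually realized, which is where the hypotheses on $l$, $|H|$ and normality enter. I would argue that $|G|-|H|$ and $2$ always occur, since family (I) and the ``one coordinate in $H$'' subcase of (IV) are non-empty whenever $H$ is a non-trivial proper subgroup; that $0$ occurs iff $H$ has two distinct non-identity elements, i.e. iff $|H|>2$; and that $6$ occurs iff one can pick $x,y\notin H$ in distinct left and distinct right cosets, which a counting estimate shows happens exactly when $l>2$ — the number of admissible $y$ is at least $(l-3)|H|+1$, so the boundary $l=3$ forces the sharper bound $|xH\cup Hx|\le 2|H|-1$. Finally $4$ occurs iff $H$ is not normal: normality forces $xH=yH\Leftrightarrow Hx=Hy$, eliminating the ``exactly one coset equality'' case and hence the value $4$, while if $H$ is non-normal I would choose $g\notin H$ and $h\in H$ with $ghg^{-1}\notin H$ and set $x=g,\ y=gh$, so that $xH=yH$ but $Hx\neq Hy$, realizing $N=4$.

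Since a non-normal subgroup automatically has $l>2$, collecting these realizability facts and reading off which of $0,2,4,6,|G|-|H|$ survive in each regime yields precisely the four cases of the statement. I expect the main obstacle to be the existence arguments rather than the master formula itself — in particular the equivalence ``$4\in c\text{-set}\iff H$ non-normal'' and the $l=3$ corner of the $N=6$ count, both of which require exhibiting explicit coset configurations rather than routine bookkeeping.
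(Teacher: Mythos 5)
Your proposal is correct, and at its core it is the same computation as the paper's: reduce to the vertex $(e,e)$ by vertex-transitivity, split candidate common neighbours into Types $1$, $2$, $3$, and observe that every count is controlled by membership in $H$ together with left/right coset (in)equalities; your explicit witness for the value $4$ (take $x=g$, $y=gh$ with $ghg^{-1}\notin H$) is exactly the paper's Lemma \ref{non-normal-lemma}. The genuine difference is in how the bookkeeping is packaged, plus one substantive improvement. The paper proves a separate lemma for each shape of non-neighbour (Lemmas \ref{2_G-H}, \ref{g_1H=g_2H}, \ref{0_in_mu}, \ref{6_in_mu}) and assembles them into Theorems \ref{H_index_2}, \ref{H_normal} and \ref{H_not_normal}; your single indicator formula --- which checks out: Type $1$ contributes $\mathbf{1}[x\notin H,\,y\notin H]+\mathbf{1}[y\notin H,\,xH\neq yH]$, Type $2$ contributes $\mathbf{1}[x\notin H,\,y\notin H]+\mathbf{1}[x\notin H,\,xH\neq yH]$, and Type $3$ the two right-coset terms --- delivers exhaustiveness (the only possible values are $0,2,4,6,|G|-|H|$) and all the non-realizability directions (normality kills $4$, $l=2$ kills $6$, $|H|=2$ kills $0$) in one stroke, where the paper obtains these only implicitly from its case enumeration. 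More substantively, your pigeonhole argument for realizing $6$ (for fixed $x\notin H$ the bad $y$ lie in $xH\cup Hx$, and since $x\in xH\cap Hx$ there remain at least $(l-3)|H|+1\geq 1$ good choices whenever $l\geq 3$) supplies a step the paper skips: Lemma \ref{6_in_mu} is stated under the hypothesis that some pair $g_1,g_2\notin H$ has distinct left cosets and distinct right cosets, and in the non-normal case (Theorem \ref{H_not_normal}) that hypothesis is invoked but never verified, whereas normality makes it obvious. One small point you should make explicit when writing this up: in case $l=2$ the statement asserts $0\in c$-set unconditionally, so note that $l=2$ together with $|G|\geq 5$ forces $|H|\geq 3$, after which your criterion ``$0$ occurs iff $|H|>2$'' applies.
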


	\begin{lemma}\label{2_G-H}
		For any non trivial proper subgroup $H$ of $G$, we have $2,|G|-|H| \in c-$set. 
	\end{lemma}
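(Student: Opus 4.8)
The goal is to exhibit two specific non-adjacent pairs of vertices realizing the common-neighbour counts $2$ and $|G|-|H|$, thereby showing both values lie in the $c$-set. Since $\Gamma_H(G)$ is vertex-transitive, I would anchor one vertex at $(e,e)$ throughout and only vary the second (non-adjacent) vertex, counting common neighbours by checking membership in $S$ just as in the proof of the preceding theorem.

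\textbf{Realizing the value $2$.} The plan is to pick a non-neighbour of $(e,e)$ of the simplest mixed form, namely a vertex $(g_1,g_2)$ with both coordinates nontrivial and $g_1 \neq g_2$, for instance $(g,g')$ with $g,g' \in G\setminus H$ and $g\neq g'$; I must first confirm such a vertex exists and is genuinely non-adjacent to $(e,e)$ (it is, since $(g,g')\notin S$ when $g\neq g'$ and both are nontrivial). I would then run through the three neighbour types of $(e,e)$ and impose the second adjacency condition $\bigl((g_1,g_2)(\text{neighbour})^{-1}\bigr)\in S$. The arithmetic should collapse to a small number of forced solutions, and the claim is that exactly two survive. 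The main thing to watch is choosing $g,g'$ so that no accidental coincidences inflate the count beyond $2$; I expect to need $g' \neq g^{-1}$, $gg'^{-1}\notin H$, and similar genericity conditions, all satisfiable because $[G:H]\geq 2$ gives enough elements outside $H$.

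\textbf{Realizing the value $|G|-|H|$.} Here I would instead take a non-neighbour sharing one coordinate with $(e,e)$, the natural candidate being $(g,e)$ with $g\in H\setminus\{e\}$ (this lies in $H$, so $(g,e)\notin S$ and the pair is non-adjacent, using $H$ nontrivial). Counting common neighbours, a Type-$1$ neighbour $(g',e)$ is common precisely when $g'\notin H$ and $g'g^{-1}\notin H$; since $g\in H$ the second condition reduces to the first, so \emph{every} Type-$1$ neighbour of $(e,e)$ is also a neighbour of $(g,e)$, giving $|G|-|H|$ of them. I would then check that Types $2$ and $3$ contribute nothing, which should follow because the first coordinate forces an element of $H$ into $S$, a contradiction. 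This yields exactly $|G|-|H|$ common neighbours.

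The construction is essentially a pair of explicit witness computations, so there is no deep obstacle; the only delicate point is the existence of a suitable \emph{generic} pair for the value $2$, i.e.\ verifying that the genericity conditions are simultaneously satisfiable for every nontrivial proper $H$ with $|G|\geq 5$. I would dispatch this by a short counting argument showing $G\setminus H$ is large enough (indeed $|G\setminus H|=|G|-|H|\geq |H|\geq 1$, and the index condition guarantees at least two distinct suitable elements) to avoid all the forbidden coincidences. Once both witnesses are in hand, the lemma follows immediately.
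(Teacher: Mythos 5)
Your treatment of the value $|G|-|H|$ is correct and is essentially the paper's own argument: the paper uses the witness $(e,h)$ with $h\in H\setminus\{e\}$, and your $(h,e)$ is its mirror image, which the paper explicitly notes works the same way. The genuine gap is in your treatment of the value $2$. You propose the witness $(g,g')$ with $g,g'\in G\setminus H$, $g\neq g'$, and expect that genericity conditions such as $gg'^{-1}\notin H$ will keep the count down at $2$. The opposite happens. For such a vertex the common neighbours of $(e,e)$ and $(g,g')$ are: $(g,e)$ and $(e,g')$ always; additionally $(g'^{-1}g,e)$ and $(e,g^{-1}g')$ exactly when $gH\neq g'H$; and additionally $(g,g)$ and $(g',g')$ exactly when $Hg\neq Hg'$. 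So the count is $2$ only in the \emph{degenerate} case $gH=g'H$ and $Hg=Hg'$, and your condition $gg'^{-1}\notin H$ (i.e., $Hg\neq Hg'$) forces the count to be $4$ or $6$. In this graph coset coincidences suppress common neighbours rather than inflate them, so genericity works against you.

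Worse, the degenerate pairs that would rescue your witness family need not exist at all: take $G=S_3$ and $H=\langle(12)\rangle$ (so $|G|=6\geq 5$, $H$ nontrivial and proper). Here $gHg^{-1}\cap H=\{e\}$ for every $g\notin H$, and the existence of $g\neq g'$ outside $H$ with $gH=g'H$ and $Hg=Hg'$ is equivalent to $gHg^{-1}\cap H\neq\{e\}$ for some $g\notin H$; hence no such pair exists, and every vertex $(g,g')$ with both coordinates outside $H$ has $4$ or $6$ common neighbours with $(e,e)$, never $2$. Since the lemma asserts $2\in c$-set for \emph{every} nontrivial proper $H$, your witness family cannot prove it. The correct witness, which the paper uses, is the genuinely mixed vertex $(g,h)$ with $g\notin H$ and $h\in H\setminus\{e\}$: Type-$1$ neighbours contribute nothing, while Types $2$ and $3$ contribute exactly one common neighbour each, namely $(e,g^{-1}h)$ and $(g,g)$, giving exactly $2$.
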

	\begin{proof}
		Let $h \in H$. Clearly $(e,e)$ and $(e,h)$ are non-adjacent and have no common neighbours of Type $1$ and $3$. Let $(e,g)$ be a common neighbour of Type $2$ of $(e,e)$ and $(e,h)$. Hence from the adjacency condition we have $gh^{-1} \notin H$, i.e., $g \notin H$. Thus, we have $|G|-|H|$ choices for $g$, i.e., $(e,e)$ and $(e,h)$ have $|G|-|H|$ many common neighbours. Therefore  $|G|-|H| \in c-$set. 
		
		({\it Similarly, it can be shown that the number of common neighbours of $(e,e)$ and $(h,e)$ and the number of common neighbours of $(e,e)$ and $(h,h)$ are also $|G|-|H|$.})
		
		Now let $h \in H$ and $g \in G \setminus H$. Clearly $(e,e)$ and $(g,h)$ are non-adjacent and have no common neighbour of Type $1$.  Let $(e,g')$ be a Type-$2$ common neighbour of $(e,e)$ and $(g,h)$. Hence $(g^{-1}, g'h^{-1}) \in S$, i.e., $g'=g^{-1}h$. Therefore $(e,e)$ and $(g,h)$ have only one common neighbour of Type $2$. Similarly we can prove $(e,e)$ and $(g,h)$ have only one common neighbour of Type $3$. Therefore $(e,e)$ and $(g,h)$ have exactly $2$ common neighbours.  Hence $2 \in c-$set. 
		
		({\it Similarly, it can be shown that the number of common neighbours of $(e,e)$ and $(h,g)$ is also $2$})
	\end{proof}
	
	\begin{lemma}\label{g_1H=g_2H}
		Let $g_1,g_2 \in G\setminus H$ such that $g_1\neq g_2$ but $g_1H=g_2H$ and $Hg_1=Hg_2$. Then the number of common neighbours of $(e,e)$ and $(g_1,g_2)$ is exactly $2$.
	\end{lemma}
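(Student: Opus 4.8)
The plan is to use the vertex-transitivity that has already reduced everything to the base vertex $(e,e)$, and then classify every common neighbour of $(e,e)$ and $(g_1,g_2)$ according to its type relative to $(e,e)$. First I would record that the pair is genuinely non-adjacent: since $g_1,g_2 \in G\setminus H$ forces $g_1 \neq e \neq g_2$, and $g_1 \neq g_2$ by hypothesis, the pair $(g_1,g_2)$ does not match any of the three shapes $(g,e)$, $(e,g)$, $(g,g)$ that define $S$, so $(g_1,g_2)\notin S$ and the two vertices are non-adjacent.

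Now any common neighbour $w$ is in particular adjacent to $(e,e)$, so $w$ must be of Type $1$ (i.e.\ $(g,e)$), Type $2$ (i.e.\ $(e,g)$), or Type $3$ (i.e.\ $(g,g)$), with $g \in G\setminus H$. For each type I would impose the remaining adjacency condition, namely that $w$ be adjacent to $(g_1,g_2)$, and then split according to which of the three members of $S$ the resulting pair equals, producing nine subcases. For Type $1$ the condition becomes $(gg_1^{-1},g_2^{-1})\in S$: the $(g',e)$ shape is impossible since $g_2\notin H$, the $(e,g')$ shape yields $g=g_1$ and hence the neighbour $(g_1,e)$, and the diagonal shape $(g',g')$ forces $g=g_2^{-1}g_1$, which lies in $H$ by the left-coset hypothesis $g_1H=g_2H$ and is therefore excluded. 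For Type $2$ the condition is $(g_1^{-1},gg_2^{-1})\in S$, which symmetrically leaves exactly the neighbour $(e,g_2)$, the diagonal subcase being killed again by $g_1H=g_2H$. For Type $3$ the condition is $(gg_1^{-1},gg_2^{-1})\in S$: the $(g',e)$ shape forces $g=g_2$ and then the first coordinate is $g_2g_1^{-1}\in H$, the $(e,g')$ shape forces $g=g_1$ and then the second coordinate is $g_1g_2^{-1}\in H$ (both excluded by the right-coset hypothesis $Hg_1=Hg_2$), and the diagonal shape forces $g_1=g_2$, contradicting $g_1\neq g_2$. Hence Type $3$ contributes nothing, and the total is exactly the two vertices $(g_1,e)$ and $(e,g_2)$.

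The work is essentially bookkeeping rather than a deep obstacle, but the point worth flagging—and the thing I would be most careful about—is that the two coset hypotheses play genuinely distinct roles and both are needed: the left-coset equality $g_1H=g_2H$ (equivalently $g_1^{-1}g_2,\,g_2^{-1}g_1\in H$) is precisely what eliminates the diagonal subcases in Types $1$ and $2$, while the right-coset equality $Hg_1=Hg_2$ (equivalently $g_1g_2^{-1},\,g_2g_1^{-1}\in H$) is what eliminates the two single-coordinate subcases in Type $3$. Dropping either hypothesis would let an extra common neighbour survive and change the count, so the main care in writing the proof is to invoke the correct membership in each subcase and to confirm that the two surviving candidates $(g_1,e)$ and $(e,g_2)$ do in fact satisfy both adjacency requirements.
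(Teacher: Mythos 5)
Your proof is correct and follows essentially the same route as the paper: classify common neighbours by their type relative to $(e,e)$, use the adjacency condition with $(g_1,g_2)$ in each case, and observe that the left-coset hypothesis kills the diagonal subcases in Types $1$ and $2$ while the right-coset hypothesis kills Type $3$, leaving exactly $(g_1,e)$ and $(e,g_2)$. The only difference is that you write out explicitly the subcases the paper dismisses with ``clearly'' and ``similarly,'' which is a matter of detail, not of method.
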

	
	\begin{proof}
		Clearly  $(e,e)$ and $(g_1,g_2)$ have no common neighbour of Type $3$.  Let $(g,e)$ be a common neighbour of Type $1$ of $(e,e)$ and $(g_1,g_2)$. Hence from the adjacency condition we have either $g=g_1$ or $g_2^{-1}g_1 \notin H$, i.e, $g_1H \neq g_2H$, which is a contradiction. Hence $(g_1,e)$ is the only common neighbour of Type $1$ of $(e,e)$ and $(g_1,g_2)$. Similarly it can be shown that $(e,g_2)$ is the only common neighbour of Type $2$ of $(e,e)$ and $(g_1,g_2)$.
	\end{proof}    
	
	\begin{lemma}\label{0_in_mu}
		If $|H|>2$, then $0 \in c-$set.
	\end{lemma}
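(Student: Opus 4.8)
The plan is to exploit vertex-transitivity and exhibit a single non-adjacent pair sharing no common neighbour. Fixing the base vertex $(e,e)$, I would search for a partner among the vertices $(h_1,h_2)$ with $h_1,h_2\in H$. Such a vertex is automatically non-adjacent to $(e,e)$ whenever $(h_1,h_2)\neq(e,e)$, because membership of a pair in $S$ forces at least one coordinate to lie in $G\setminus H$, and no element of $S$ has both coordinates inside $H$.

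The heart of the argument is a case check of the three neighbour types of $(e,e)$ against such a partner. A common neighbour $(a,b)$ must satisfy both $(a,b)\in S$ and $(ah_1^{-1},bh_2^{-1})\in S$. A short computation should show that a Type-$1$ candidate $(g,e)$ can survive only when $h_2=e$, a Type-$2$ candidate $(e,g)$ only when $h_1=e$, and a Type-$3$ candidate $(g,g)$ only when $h_1=h_2$; in each remaining sub-case the adjacency condition forces $g\in H$ or demands that some element of $H$ lie outside $H$, both impossible. Consequently, to eliminate all three types simultaneously I need $h_1\neq e$, $h_2\neq e$ and $h_1\neq h_2$, i.e.\ two distinct non-identity elements of $H$.

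The final step is to observe that such a choice is available exactly when $H\setminus\{e\}$ contains at least two elements, which is precisely the hypothesis $|H|>2$. Picking $h_1,h_2\in H\setminus\{e\}$ with $h_1\neq h_2$, the vertices $(e,e)$ and $(h_1,h_2)$ are non-adjacent and have no common neighbour, so $0\in c-$set.

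I do not anticipate a genuine obstacle: the whole content is the adjacency bookkeeping across the three neighbour types, and the role of $|H|>2$ is simply to guarantee the required pair $h_1\neq h_2$ in $H\setminus\{e\}$. The only points demanding care are verifying the non-adjacency of $(h_1,h_2)$ to $(e,e)$ and checking each sub-case of the type analysis so that no common neighbour is silently overlooked.
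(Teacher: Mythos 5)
Your proposal is correct and follows essentially the same route as the paper: choose distinct $h_1,h_2\in H\setminus\{e\}$ (possible exactly because $|H|>2$), note that $(e,e)$ and $(h_1,h_2)$ are non-adjacent since no element of $S$ has both coordinates in $H$, and rule out common neighbours of each of the three types by the same case analysis (Type $1$ forces $h_2=e$, Type $2$ forces $h_1=e$, Type $3$ forces $h_1=h_2$). The paper's proof is just a terser version of this identical argument, so there is nothing to add.
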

	\begin{proof}
		Let $h_1, h_2 \in H \setminus \{e\}$, $h_1 \neq h_2$. Clearly $(e,e)$ and $(h_1,h_2)$ are non-adjacent and have no common neighbour of Type $3$. Let $(g,e)$ be a Type-$1$ common neighbour of $(e,e)$ and $(h_1,h_2)$. This implies $h_2=e$, a contradiction. Similarly if $(e,g)$ is a Type-$2$ common neighbour of $(e,e)$ and $(h_1,h_2)$, then we have $h_1=e$, which is a contradiction. Therefore $(e,e)$ and $(h_1,h_2)$ have no common neighbour, i.e., $0 \in c-$set.
	\end{proof}
	
	\begin{theorem}\label{H_index_2}
		Let $H$ be a non trivial proper subgroup of $G$ and $|G|>4$. If  $[G:H]=2$, then $c-$set $=\{0,2,|G|-|H|\}$.
	\end{theorem}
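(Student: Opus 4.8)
The plan is to prove the two inclusions of the set equality separately. For the containment $\{0,2,|G|-|H|\}\subseteq c\text{-set}$ I would first record the arithmetic forced by the index hypothesis: an index-$2$ subgroup makes $|G|$ even, so $|G|>4$ gives $|G|\geq 6$ and hence $|H|=|G|/2\geq 3>2$. Consequently Lemma \ref{0_in_mu} applies and yields $0\in c\text{-set}$, while Lemma \ref{2_G-H} yields both $2\in c\text{-set}$ and $|G|-|H|\in c\text{-set}$. This settles one direction immediately, and it is worth recalling here that an index-$2$ subgroup is automatically normal and, more importantly, that $G\setminus H$ is a single coset.

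The substantive work is the reverse containment, namely that \emph{every} pair of non-adjacent vertices has $0$, $2$, or $|G|-|H|$ common neighbours. By vertex-transitivity I may fix one vertex to be $(e,e)$ and take $(g_1,g_2)$ to be an arbitrary vertex non-adjacent to it. I would then case-split according to whether each of $g_1,g_2$ lies in $H$ or in $G\setminus H$, and show in each case that the count already appears among the three lemmas.

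Carrying out the enumeration: when both $g_1,g_2\in H$ and they are not both $e$, the pair is one of $(h,e)$, $(e,h)$, $(h,h)$, handled by Lemma \ref{2_G-H} with value $|G|-|H|$, or else $(h_1,h_2)$ with distinct non-identity entries, handled by Lemma \ref{0_in_mu} with value $0$. When exactly one coordinate lies in $G\setminus H$, non-adjacency forces the $H$-coordinate to be a non-identity element (an identity $H$-coordinate would make it a Type-$1$ or Type-$2$ neighbour), so Lemma \ref{2_G-H} applies and gives $2$. Finally, when both $g_1,g_2\in G\setminus H$, non-adjacency rules out $g_1=g_2$ (that is a Type-$3$ neighbour), so $g_1\neq g_2$; but since $G\setminus H$ is a single coset we automatically have $g_1H=g_2H$ and $Hg_1=Hg_2$, whence Lemma \ref{g_1H=g_2H} applies and again gives $2$.

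The point where the index-$2$ hypothesis is genuinely essential, and the only place the argument could go wrong, is this last case: for a general subgroup two distinct elements of $G\setminus H$ may lie in different cosets, and it is exactly that freedom which produces the extra values $4$ and $6$ in the higher-index parts of Theorem \ref{mu-set}. Index $2$ collapses $G\setminus H$ to a single coset, so the hypotheses of Lemma \ref{g_1H=g_2H} are met uniformly and no value outside $\{0,2,|G|-|H|\}$ can arise. Combining the two inclusions then gives $c\text{-set}=\{0,2,|G|-|H|\}$.
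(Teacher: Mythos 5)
Your proof is correct and follows essentially the same route as the paper's: enumerate the possible forms of vertices non-adjacent to $(e,e)$ and apply Lemmas \ref{2_G-H}, \ref{0_in_mu} and \ref{g_1H=g_2H}, using the fact that $[G:H]=2$ forces $G\setminus H$ to be a single left and right coset (equivalently, that $H$ is normal) in the case of two non-$H$ coordinates. If anything, you are slightly more careful than the paper, which invokes Lemma \ref{0_in_mu} without explicitly noting that $[G:H]=2$ together with $|G|>4$ forces $|H|\geq 3>2$.
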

	\begin{proof}
		Vertices which are not adjacent to $(e,e)$ are of one of the $7$ forms $(h,e), (e,h), (h,h)$, $(h,h')$, $(g,h), (h,g), (g,g')$ where $h,h'\in H \setminus \{e\}$, $g,g' \in G \setminus H$ and $h\neq h'$, $g\neq g'$.
		
		From the proof of Lemma \ref{2_G-H}, it follows that the number of common neighbours of each of $(e,h),(h,e)$ and $(h,h)$ with $(e,e)$ is exactly $|G|-|H|$ and the number of common neighbours of each of $(g,h)$ and $(h,g)$ with $(e,e)$ is exactly $2$. Similarly, from Lemma \ref{0_in_mu}, it follows that the number of common neighbours of $(h,h')$ with $(e,e)$ is $0$. Hence $|G|-|H|, 0 \in c-$set.
		
		As $[G:H]=2$, hence $H$ is normal in $G$. Thus $gH=g'H$ and $Hg=Hg'$.  So by Lemma \ref{g_1H=g_2H},  it follows that the number of common neighbours of $(g,g')$ with $(e,e)$ is $2$. Hence $2 \in c-$set, this completes the proof. 
	\end{proof}

	\begin{lemma}\label{6_in_mu}
		Let $H$ be a non trivial proper subgroup of $G$. If  $[G:H]>2$ and $g_1H \neq g_2H$, $Hg_1 \neq Hg_2$ for some $g_1,g_2 \notin H$ then $6 \in c-$set.
	\end{lemma}
	\begin{proof}
		Consider the vertex $(g_1,g_2)$. If $(g,e)$ is a common neighbour of $(e,e)$ and $(g_1,g_2)$ then either $g=g_1$ or $g=g_2^{-1}g_1 \notin H$, i.e., $(g,e)=(g_1,e)$ or $(g_2^{-1}g_1,e)$. Thus the number of common neighbours of Type-$1$ of $(g_1,g_2)$ and $(e,e)$ is $2$.
		
		Similarly if $(e,g)$ is a common neighbour of $(e,e)$ and $(g_1,g_2)$, then either $g=g_2$ or $g=g_1^{-1}g_2 \notin H$. Thus the number of common neighbours of Type-$2$ of $(g_1,g_2)$ and $(e,e)$ is $2$.
		
		Lastly, if $(g,g)$ is a common neighbour of $(e,e)$ and $(g_1,g_2)$, then either $g=g_1$ and $g_1g_2^{-1} \notin H$ or $g=g_2$ and $g_2g_1^{-1} \notin H$. Thus the number of common neighbours of Type-$3$ of $(g_1,g_2)$ and $(e,e)$ is $2$.
		
		So $(e,e)$ and $(g_1,g_2)$ have $6$ common neighbours in total. Hence $6 \in c-$set.
	\end{proof}
	
	\begin{theorem}\label{H_normal}
		If $H$ is a normal subgroup of $G$ then 
		\begin{itemize}
			\item $|H|=2$ implies $c-$set $=\{2,6,|G|-|H|\}$.
			\item $|H|>2$ implies $c-$set $=\{0,2,6,|G|-|H|\}$.
		\end{itemize}
	\end{theorem}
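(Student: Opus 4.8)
The plan is to determine the full $c$-set by partitioning the non-neighbours of the base vertex $(e,e)$ into the same seven families used in the proof of Theorem \ref{H_index_2}, namely $(h,e),(e,h),(h,h),(h,h'),(g,h),(h,g),(g,g')$ with $h,h'\in H\setminus\{e\}$, $h\ne h'$ and $g,g'\in G\setminus H$, $g\ne g'$, and then read off the number of common neighbours of each family with $(e,e)$ directly from the lemmas already proved. Throughout I work under the standing hypothesis $[G:H]=l>2$ of Theorem \ref{mu-set}(2); note that when $|H|=2$ this is automatic, since $|G|\ge 5$ forces $l\ge 3$.

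First I would dispose of the families that do not depend on normality. By Lemma \ref{2_G-H}, each of $(e,h),(h,e),(h,h)$ has exactly $|G|-|H|$ common neighbours with $(e,e)$, and each of $(g,h),(h,g)$ has exactly $2$; hence $|G|-|H|,2\in c$-set. The value $0$ can come only from the family $(h,h')$: by Lemma \ref{0_in_mu} this family contributes $0$ whenever it is non-empty, which happens precisely when $|H|>2$ (for $|H|=2$ the set $H\setminus\{e\}$ is a singleton, so no pair $(h,h')$ with $h\ne h'$ exists, and $0$ is then not attained).

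The heart of the argument is the family $(g,g')$ with $g,g'\notin H$, $g\ne g'$, and here normality does the work. Since $H\trianglelefteq G$, left and right cosets coincide, so $gH=g'H$ is equivalent to $Hg=Hg'$; thus every such pair falls into exactly one of two cases. If $g$ and $g'$ lie in the same coset, then Lemma \ref{g_1H=g_2H} applies and gives exactly $2$ common neighbours; if they lie in different cosets, then $gH\ne g'H$ and $Hg\ne Hg'$, so Lemma \ref{6_in_mu} gives exactly $6$. Because $l>2$ there are at least two non-trivial cosets, so one can actually choose $g,g'$ in distinct cosets, which shows $6$ is genuinely attained; hence $6\in c$-set.

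Collecting the contributions of all seven families then yields the stated result: every non-neighbour of $(e,e)$ contributes one of $|G|-|H|$, $2$, $6$, or (only when $|H|>2$) $0$, and each of these values is attained, so the $c$-set is $\{2,6,|G|-|H|\}$ when $|H|=2$ and $\{0,2,6,|G|-|H|\}$ when $|H|>2$. The only point requiring care is the bookkeeping for the $(g,g')$ family: one must use normality to collapse the four a priori coset configurations into the clean same-coset/different-coset dichotomy, and then invoke the existence of at least two non-trivial cosets (guaranteed by $l>2$) to force $6$ into the $c$-set, while the emptiness of the $(h,h')$ family for $|H|=2$ is exactly what removes $0$.
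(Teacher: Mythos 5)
Your proof is correct and takes essentially the same route as the paper: the paper's own (very terse) proof likewise partitions the non-neighbours of $(e,e)$ into the families of Theorem \ref{H_index_2} plus the one extra form $(g,g')$ with $gH\neq g'H$, and reads off the values from Lemmas \ref{2_G-H}, \ref{g_1H=g_2H}, \ref{0_in_mu} and \ref{6_in_mu}. Your write-up merely makes explicit what the paper leaves implicit, namely that normality collapses the coset configurations into the same-coset/different-coset dichotomy, that $l>2$ guarantees $6$ is attained, and that the emptiness of the $(h,h')$ family for $|H|=2$ is what removes $0$.
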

	\begin{proof}
		Proceeding as in the proof of Theorem \ref{H_index_2}, vertices which are not adjacent to $(e,e)$ are of $8$ forms. One extra form arises due to the fact that in this case $(g,g')$, $gH\neq g'H$ may also occur.
		
		Rest of the proof follows as in Theorem \ref{H_index_2}, using Lemmas \ref{2_G-H}, \ref{g_1H=g_2H}, \ref{0_in_mu} and \ref{6_in_mu}.
	\end{proof}

	\begin{lemma}\label{non-normal-lemma}
		Let $G$ be a group and $H$ be a subgroup of $G$ which is not normal in $G$, then there exists \begin{enumerate}
			\item $g_1,g_2 \in G\setminus H$ such that $Hg_1=Hg_2$ but $g_1H \neq g_2H$.
			\item $g_1,g_2 \in G\setminus H$ such that $Hg_1\neq Hg_2$ but $g_1H = g_2H$.
		\end{enumerate}   
	\end{lemma}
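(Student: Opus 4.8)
The plan is to translate the failure of normality into a single conjugating element and then read off the two required pairs from its left and right actions on $H$. First I would use non-normality to fix some $g \in G$ with $gHg^{-1} \neq H$; note that $g \notin H$, since $hHh^{-1} = H$ for every $h \in H$. Because conjugation by $g$ is a bijection of $G$ carrying $H$ onto $gHg^{-1}$, the two subgroups have the same (finite) cardinality, so $gHg^{-1} \neq H$ forces the strict non-inclusion $H \not\subseteq gHg^{-1}$. Conjugating the inequality by $g^{-1}$ shows likewise that $g^{-1}Hg \neq H$, hence $H \not\subseteq g^{-1}Hg$. These two strict non-containments are exactly what supplies the elements needed below.

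For part (1) I would set $g_1 = g$ and choose $h \in H \setminus gHg^{-1}$, putting $g_2 = hg$. Since $Hh = H$, we get $Hg_2 = Hg = Hg_1$, so the right cosets agree. For the left cosets, $g_1 H = g_2 H$ would mean $g^{-1}hg \in H$, i.e. $h \in gHg^{-1}$, contrary to the choice of $h$; thus $g_1 H \neq g_2 H$. Finally $g_2 = hg \notin H$, for otherwise $g = h^{-1}(hg) \in H$. For part (2) the construction is mirror-symmetric: take $g_1 = g$, pick $h' \in H \setminus g^{-1}Hg$, and set $g_2 = gh'$. Then $g_2 H = gH = g_1 H$ because $h'H = H$, while $Hg_1 = Hg_2$ would force $gh'g^{-1} \in H$, i.e. $h' \in g^{-1}Hg$, again contradicting the choice of $h'$; and $g_2 = gh' \notin H$ by the same argument as before.

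The computations in the two parts are entirely routine coset manipulations, so the only real content lies in the opening step. The point to get right is the passage from $gHg^{-1} \neq H$ to the \emph{strict} non-inclusions $H \not\subseteq gHg^{-1}$ and $H \not\subseteq g^{-1}Hg$, which is where finiteness of $H$ (guaranteed here, since $G$ is finite) is used: a bijection can carry a set properly into itself, so without the equal-cardinality argument one could only conclude a one-sided failure and might be forced to replace $g$ by $g^{-1}$ to obtain the witness for one of the two parts. I expect this cardinality bookkeeping, rather than any of the coset identities, to be the main thing worth stating carefully.
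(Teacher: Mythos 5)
Your proof is correct and takes essentially the same approach as the paper: the paper likewise picks $a \in G\setminus H$ and $h \in H$ with $a^{-1}ha \notin H$, sets $g_1=a$, $g_2=ha$ for part (1), and treats part (2) as symmetric, which is exactly your construction $g_2 = hg$ (resp.\ $g_2 = gh'$). The only difference is that you justify the existence of such an $h$ via the equal-cardinality argument, a step the paper simply asserts as an immediate consequence of non-normality.
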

	\begin{proof}
		As $H$ is not normal in $G$, there exist $a \in G \setminus H$ and $h \in H$ such that $a^{-1}ha \notin H$. Set $g_1=a$ and $g_2=ha$. Therefore $g_1g_2^{-1}=h^{-1} \in H$ which implies $Hg_1=Hg_2$ and $g_1^{-1}g_2=a^{-1}ha \notin H$ implies $g_1H \neq g_2H$.  The other part follows similarly.
	\end{proof}
	
	\begin{theorem}\label{H_not_normal}
		If $H$ is not a normal subgroup of $G$, then
		\begin{itemize}
			\item $|H|=2$ implies $c-$set $=\{2,4,6,|G|-|H|\}$.
			\item $|H|>2$ implies $c-$set $=\{0,2,4,6,|G|-|H|\}$.
		\end{itemize}
	\end{theorem}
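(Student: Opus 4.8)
The plan is to follow the template of Theorems~\ref{H_index_2} and \ref{H_normal}: enumerate all ``forms'' of vertices non-adjacent to $(e,e)$ and read off the number of common neighbours of each form from the available lemmas, the only genuinely new ingredient being the \emph{mixed-coset} configuration that forces the value $4$.

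First I would record that a non-normal subgroup cannot have index $2$ (an index-$2$ subgroup is always normal) and $H\neq G$, so $[G:H]\geq 3$; in particular the hypotheses of Lemmas~\ref{0_in_mu} and \ref{6_in_mu} are within reach. Next, exactly as in Theorem~\ref{H_index_2}, the vertices not adjacent to $(e,e)$ fall into the forms $(h,e),(e,h),(h,h),(h,h'),(g,h),(h,g)$ and $(g,g')$ with $h,h'\in H\setminus\{e\}$, $g,g'\in G\setminus H$, $h\neq h'$, $g\neq g'$. By Lemmas~\ref{2_G-H} and \ref{0_in_mu} the first six forms contribute the values $|G|-|H|,|G|-|H|,|G|-|H|,0,2,2$, where the form $(h,h')$ (hence the value $0$) is present precisely when $|H|>2$. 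All the novelty is concentrated in the last form $(g,g')$, which I would split into four sub-cases according to whether $gH=g'H$ and whether $Hg=Hg'$: Lemma~\ref{g_1H=g_2H} handles the ``both equal'' sub-case (value $2$, already in the $c$-set) and Lemma~\ref{6_in_mu} the ``both different'' sub-case (value $6$).

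The key new step is the computation for the two \emph{mixed} sub-cases, where exactly one of $gH=g'H$, $Hg=Hg'$ holds. I would count the three types of common neighbours of $(e,e)$ and $(g_1,g_2)$ directly from the adjacency condition, just as in Lemma~\ref{6_in_mu}: when $g_1H=g_2H$ but $Hg_1\neq Hg_2$ one finds exactly one Type-$1$, one Type-$2$ and two Type-$3$ common neighbours, while when $Hg_1=Hg_2$ but $g_1H\neq g_2H$ one finds two Type-$1$, two Type-$2$ and no Type-$3$ common neighbours; either way the total is $4$. Since $H$ is not normal, Lemma~\ref{non-normal-lemma} guarantees that at least one such mixed pair $(g_1,g_2)$ exists, so $4\in c$-set.

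It remains to confirm that the value $6$ is genuinely attained, i.e.\ that a pair $g_1,g_2\notin H$ with $g_1H\neq g_2H$ \emph{and} $Hg_1\neq Hg_2$ exists (in the normal case of Theorem~\ref{H_normal} this was automatic, since left and right cosets coincide). I expect this to be the main obstacle and would settle it by a counting argument: if no such pair existed, then fixing any $g_1\notin H$ would force every element of $G\setminus(H\cup g_1H)$ into $Hg_1\setminus\{g_1\}$, whence $|G|-2|H|\leq |H|-1$ and $[G:H]<3$, contradicting $[G:H]\geq 3$. Assembling the contributions, the attained values are $\{|G|-|H|,2,4,6\}$ together with $0$ exactly when $|H|>2$, which is precisely the claimed $c$-set in the two cases $|H|=2$ and $|H|>2$.
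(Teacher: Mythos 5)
Your proposal is correct and follows the same overall strategy as the paper's proof: enumerate the forms of vertices non-adjacent to $(e,e)$, quote Lemmas \ref{2_G-H}, \ref{g_1H=g_2H}, \ref{0_in_mu} and \ref{6_in_mu} for the values $|G|-|H|$, $2$, $0$, $6$, and use Lemma \ref{non-normal-lemma} to produce a mixed-coset pair yielding the new value $4$. In fact your write-up is more complete than the paper's in two respects. First, the paper computes the value $4$ only for the mixed case $Hg_1=Hg_2$, $g_1H\neq g_2H$ supplied by Lemma \ref{non-normal-lemma}(1), and then asserts that all remaining non-adjacent vertices are covered by the four lemmas; but the opposite mixed case ($g_1H=g_2H$, $Hg_1\neq Hg_2$), which also occurs by Lemma \ref{non-normal-lemma}(2), is covered by none of them. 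You compute it explicitly (one Type-$1$, one Type-$2$, two Type-$3$ common neighbours) and confirm it also gives $4$, which is genuinely needed to conclude that the $c$-set contains no values beyond those claimed. Second, Lemma \ref{6_in_mu} is conditional on the existence of a pair $g_1,g_2\notin H$ with $g_1H\neq g_2H$ \emph{and} $Hg_1\neq Hg_2$; this is automatic when $H$ is normal, but in the non-normal case the paper never verifies it, whereas your counting argument (if no such pair existed then $G\setminus(H\cup g_1H)\subseteq Hg_1\setminus\{g_1\}$, forcing $|G|-2|H|\leq |H|-1$ and hence $[G:H]<3$) settles it correctly. So your proof is not only valid but patches two small gaps in the paper's own argument.
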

	\begin{proof}
		As $H$ is not normal in $G$, by Lemma \ref{non-normal-lemma}(1), there exists $g_1, g_2 \in G \setminus H$ such that either $g_1H \neq g_2H$ and $Hg_1=Hg_2$. Consider the vertex $(g_1,g_2)$.
		Let $(g,e)$ be a common neighbour of $(e,e)$ and $(g_1,g_2)$. Hence either $g=g_1$ or $g=g_2^{-1}g_1 \notin H$. So $(e,e)$ and $(g_1,g_2)$ have $2$ common neighbours $(g_1,e)$ and $(g_2^{-1}g_1,e)$ of Type-$1$. Similarly we can prove  $(e,e)$ and $(g_1,g_2)$ have $2$ common neighbours $(e,g_2)$ and $(e,g_1^{-1}g_2)$ of Type-$2$. As $Hg_1=Hg_2$,  $(e,e)$ and $(g_1,g_2)$ have no common neighbour of Type-$3$. Hence they have total $4$ common neighbours, i.e., $4 \in c-$set.
		
		Since the cases of all other non-adjacent vertices of $(e,e)$ has been dealt with in the Lemmas \ref{2_G-H}, \ref{g_1H=g_2H}, \ref{0_in_mu} and \ref{6_in_mu}, the theorem follows.	
	\end{proof}
	
	\textbf{Proof of the Theorem \ref{mu-set}:} Combining the Theorems \ref{H_index_2}, \ref{H_normal} and \ref{H_not_normal}, we have the theorem. \qed
	
	\section{Automorphism Group of $\Gamma_H(G)$}\label{Automorphism-Section}
	In this section, we compute the full automorphism group of $\Gamma_H(G)$. To begin with, we fix some notations and prove some lemmas which will eventually be used to prove the main theorem of this section.
	
	Let $|H|=k$, $[G:H]=l+1$ and $\{ H, Ha_1, Ha_2, \ldots , Ha_l \}$ be the set of all right cosets of $H$ in $G$. Define 
	$$S_1=\{ (g,e) : g \in G \setminus H \}, S_2=\{ (e,g) : g \in G \setminus H \}, S_3=\{ (g,g) : g \in G \setminus H \} \mbox{ and}$$ 
	$$S^i_1=\{ (g,e) : g \in Ha_i \}, S^i_2=\{ (e,g) : g \in Ha_i \}, S^i_3=\{ (g,g) : g \in Ha_i \} \mbox{ for } i= 1,2, \ldots , l.$$  
	Hence  $$\bigcup_{i=1}^l S^i_1=S_1,~~ \bigcup_{i=1}^l S^i_2=S_2, ~~\bigcup_{i=1}^l S^i_3=S_3 \mbox{ and }\bigcup_{i=1}^3 S_i=S.$$ 
	
	Define $\langle S_i \rangle$ to be the subgraph of $\Gamma(G)$ induced by $S_i$ for $i=1,2,3$. Let $$\begin{array}{cc}
	H_1=\{ (h,e) : h \in H \setminus \{e \} \}; & C_1=\{(g,e):g \in G\setminus \{e\}\}\\
	H_2=\{ (e,h) : h \in H \setminus \{e \}  \}; & C_2=\{(e,g):g \in G\setminus \{e\} \}\\
	H_3=\{ (h,h) : h \in H \setminus \{e \}  \}; & C_3=\{(g,g):g \in G\setminus \{e\} \}
	\end{array}$$ 
	
	\begin{lemma}\label{complete_l_partite}
		Each of $\langle S_1 \rangle$,  $\langle S_2 \rangle$ and $\langle S_3 \rangle$ are complete $l$-partite graph with each partite set of size $k$, i.e., $$\langle S_1 \rangle \cong \langle S_2 \rangle \cong \langle S_3 \rangle \cong K_{ \underbrace{ k,k,\ldots , k}_{l~  times}}$$ 
	\end{lemma}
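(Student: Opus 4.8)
The plan is to show that, for each $i \in \{1,2,3\}$, adjacency between two distinct vertices inside $S_i$ is governed entirely by whether their underlying group elements lie in the same right coset of $H$, and that this is precisely the complete multipartite structure. First I would translate the Cayley adjacency into a coset condition. Taking two distinct vertices $(g,e), (g',e) \in S_1$, their difference is $(g(g')^{-1}, e)$; since $g \neq g'$, the only way this can lie in $S$ is through the form $\{(x,e) : x \in G \setminus H\}$, because the forms $(e,x)$ and $(x,x)$ would both force $g(g')^{-1}=e$, i.e. $g = g'$. Hence $(g,e)$ and $(g',e)$ are adjacent if and only if $g(g')^{-1} \notin H$. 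Using the standard fact that $g(g')^{-1} \in H \iff Hg = Hg'$, adjacency inside $\langle S_1 \rangle$ holds exactly when $g$ and $g'$ lie in distinct right cosets of $H$.

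Next I would record that the identical computation works verbatim for $S_2$ and $S_3$: for $(e,g),(e,g') \in S_2$ the difference $(e, g(g')^{-1})$ lies in $S$ iff $g(g')^{-1} \notin H$, and for $(g,g),(g',g') \in S_3$ the difference $(g(g')^{-1}, g(g')^{-1})$ lies in $S$ iff $g(g')^{-1} \notin H$. Thus all three induced subgraphs share the same adjacency rule: two distinct vertices are adjacent exactly when their group components lie in different right cosets of $H$.

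Then I would assemble the multipartite structure. Since $G \setminus H = Ha_1 \cup \cdots \cup Ha_l$ is a disjoint union of the $l = [G:H]-1$ non-identity right cosets, each of cardinality $k = |H|$, the sets $S_i^1, \ldots, S_i^l$ partition $S_i$ into $l$ blocks of size $k$. By the adjacency rule just established, two vertices in the same block $S_i^j$ (same coset) are non-adjacent, while any two vertices in distinct blocks are adjacent. This is exactly the definition of the complete $l$-partite graph with parts of size $k$, giving $\langle S_i \rangle \cong K_{k,k,\ldots,k}$ for each $i$, and in particular the three graphs are mutually isomorphic.

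I do not expect a genuine obstacle here; the content is the clean reduction of the Cayley-graph adjacency to the right-coset relation. The one place to be careful is the short case analysis verifying that, for within-$S_i$ pairs, only one of the three defining forms of $S$ can be responsible for an edge, so that no spurious adjacencies arise. Once that is pinned down, the identification of the blocks $S_i^j$ with the non-identity right cosets and the multipartite conclusion are immediate.
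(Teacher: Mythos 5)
Your proposal is correct and follows essentially the same route as the paper: reduce adjacency inside each $S_i$ to the condition $g(g')^{-1}\notin H$, i.e.\ to lying in distinct right cosets, so that the blocks $S_i^1,\ldots,S_i^l$ are independent sets pairwise joined completely. If anything, you are slightly more explicit than the paper in ruling out the other two forms of $S$ as sources of edges within $S_i$, a step the paper's proof passes over implicitly.
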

	\begin{proof}
		We will show that $\langle S_1 \rangle$ is a complete $l$-partite graph with partite sets $S_1^i$ and other cases can be treated similarly. Clearly $|S_1^i|=k$ for all $i=1,2, \ldots , l$. 
		
		Let $(g_1,e), (g_2,e) \in S^i_1$. If $(g_1,e) \sim (g_2,e)$, then $(g_1 g_2^{-1},e) \in S_1$, i.e., $g_1 g_2^{-1} \notin H$, i.e., $Hg_1 \neq Hg_2$ which is a contradiction as $g_1,g_2 \in Ha_i$. Hence $(g_1,e) \nsim (g_2,e)$. Thus each $S^i_1$ is an independent set.
		
		Now let $(g,e) \in S^i_1$ and $(g',e) \in S^j_1$, $i \neq j$. Hence $g \in Ha_i$, $g' \in Ha_j$, i.e., $Hg \neq Hg'$, i.e., $gg'^{-1} \notin H$, i.e., $(gg'^{-1},e) \in S_1$, i.e., $(g,e) \sim (g',e)$. This proves that it a complete $l$-partite graph. (See Figure \ref{picture}.)
	\end{proof}
	
	\begin{figure}[ht]
		\centering
		\begin{center}
			\includegraphics[scale=0.36]{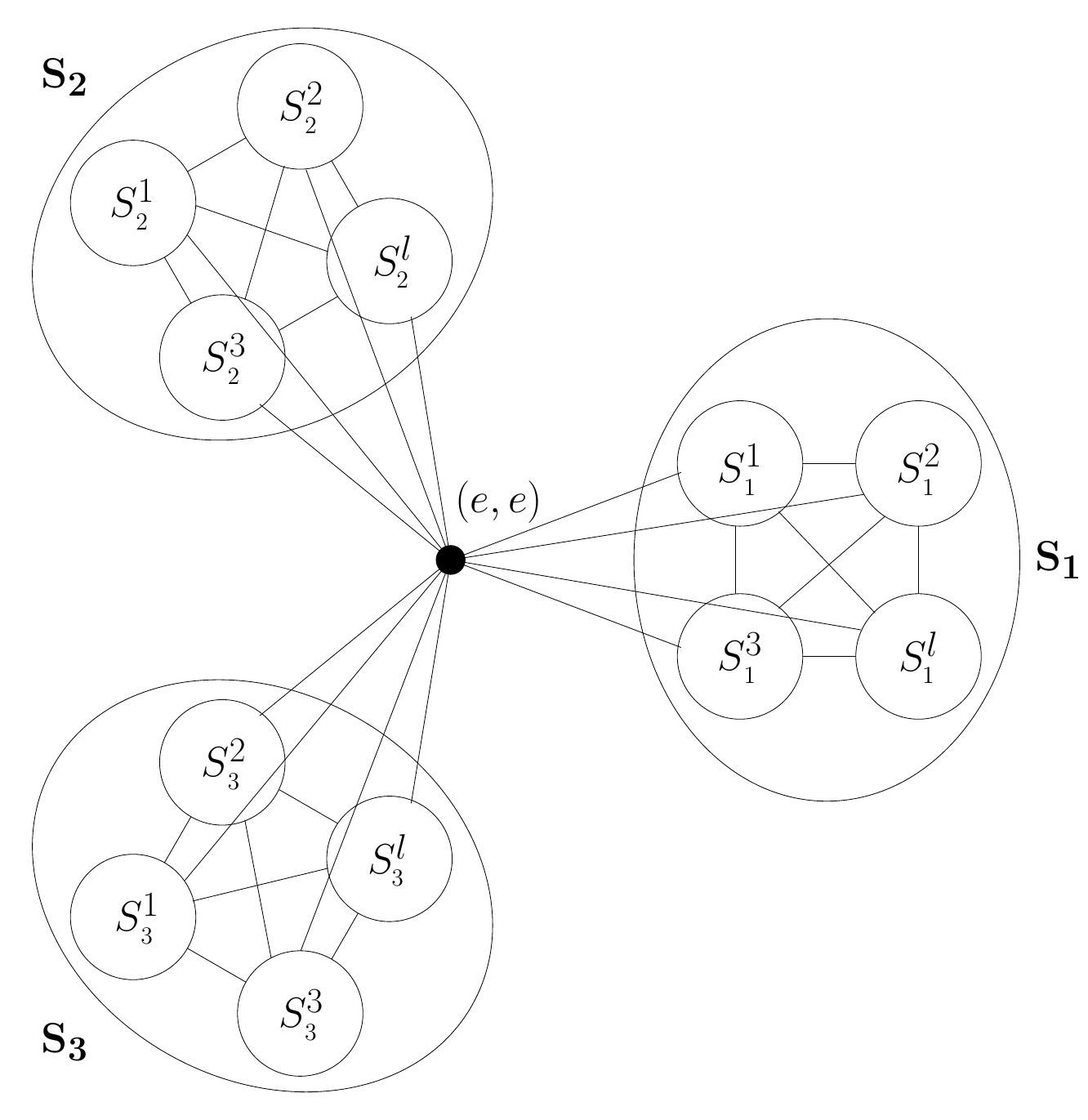}
			\caption{The neighbourhood of $(e,e)$ in $\Gamma_H(G)$}
			\label{picture}
		\end{center}
	\end{figure}

 Note that $S_i\subseteq C_i$ for $i=1,2,3$ and $\langle C_i \rangle$ is a complete $(l+1)$-partite graph where the partite sets are $H_i$ and $S^j_i$ for $j=1,2,\ldots,l$.
	
	Now we deal with two cases separately according as when $[G:H]=2$ and $[G:H]>2$.
	
	\begin{lemma}\label{complete_bi_partite}
		Let $[G:H]=2,|H|=k$ and $L$ be any complete bipartite subgraph of $\Gamma_H(G)$ with partite sets $X$ and $Y$ such that $|X|=k$  and $|Y|=k-1$. If $X \subset S$ and $Y \subset G \times G \setminus \{ (e,e) \cup S \}$, then $L=\langle C_1 \rangle$ or $\langle C_2 \rangle$ or $\langle C_3 \rangle$. 
	\end{lemma}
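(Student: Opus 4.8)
The plan is to exploit the simple observation that for a vertex $y$ lying outside $S\cup\{(e,e)\}$, the set $N(y)\cap S$ is precisely the set of common neighbours of the non-adjacent pair $(e,e)$ and $y$, so its cardinality is controlled by the already-computed $c$-set. Since $[G:H]=2$ forces $|G|=2k$, the standing hypothesis $|G|\geq 5$ gives $|G|\geq 6$ and hence $k\geq 3$; and by Theorem \ref{H_index_2} the $c$-set is $\{0,2,|G|-|H|\}=\{0,2,k\}$, so $|N(y)\cap S|\in\{0,2,k\}$ for every such $y$. First I would record the following refinement, which is read off directly from the case analysis in the proofs of Lemma \ref{2_G-H} and Theorem \ref{H_index_2}: among the vertices not adjacent to $(e,e)$, the only ones realising exactly $k=|G|-|H|$ common neighbours with $(e,e)$ are those of $H_1$, $H_2$ and $H_3$, and moreover $N(y)\cap S$ equals $S_1$, $S_2$ or $S_3$ according as $y\in H_1$, $H_2$ or $H_3$.

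With this in hand, fix any $y\in Y$ (which is possible since $|Y|=k-1\geq 2$). As $L$ is complete bipartite with parts $X$ and $Y$, we have $X\subseteq N(y)$, and because $X\subseteq S$ this gives $X\subseteq N(y)\cap S$, so $|N(y)\cap S|\geq |X|=k$. Since $k\geq 3>2$, the trichotomy above forces $|N(y)\cap S|=k$, whence $y\in H_1\cup H_2\cup H_3$ and $N(y)\cap S=S_i$ for the corresponding index $i$. Now $X\subseteq N(y)\cap S=S_i$ together with $|X|=k=|S_i|$ yields $X=S_i$.

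It then remains to identify $Y$. Every $y'\in Y$ is adjacent to all of $X=S_i$, so $S_i\subseteq N(y')\cap S$; since $|N(y')\cap S|\in\{0,2,k\}$ and $S_i$ already has $k$ elements, this forces $N(y')\cap S=S_i$, and by the refinement $y'\in H_i$. Thus $Y\subseteq H_i$, and as $|Y|=k-1=|H_i|$ we conclude $Y=H_i$. Therefore $X\cup Y=S_i\cup H_i=C_i$, and since $\langle C_i\rangle$ is exactly the complete bipartite graph on the parts $S_i$ and $H_i$, the edges of $L$ coincide with those of $\langle C_i\rangle$, i.e. $L=\langle C_i\rangle$. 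The only genuinely delicate point is the refinement used at the outset — upgrading the abstract $c$-value $|G|-|H|$ into the precise statement that the large common-neighbourhoods occur solely at $H_1,H_2,H_3$ with $N(y)\cap S$ equal to the matching $S_i$ — but this requires no new computation, being already contained in the counting carried out for Lemma \ref{2_G-H} and Theorem \ref{H_index_2}.
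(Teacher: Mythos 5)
Your proof is correct, and it takes a genuinely different route from the paper's. The paper works inside $X$: if $X$ contained vertices of two different types, say a Type-$1$ vertex $(a,e)$ and a Type-$2$ vertex $(e,a')$, a direct adjacency computation shows this non-adjacent pair has at most one common neighbour, namely $(a,a')$, whereas complete-bipartiteness would force them to share all $k-1\geq 2$ vertices of $Y$; hence $X$ is of pure type, so $X=S_i$ by cardinality, and then $Y=H_i$ follows from adjacency plus cardinality. You instead work from $Y$: for $y\in Y$, the set $N(y)\cap S$ is the common neighbourhood of the non-adjacent pair $(e,e)$, $y$, so Theorem \ref{H_index_2} restricts its size to $\{0,2,k\}$; since it contains $X$ it has size exactly $k$, and your refinement then yields $y\in H_i$ and $N(y)\cap S=S_i$, pinning down $X=S_i$ and $Y\subseteq H_i$ in one stroke. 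Both are common-neighbour arguments, but the pairs being counted differ: two vertices of $X$ in the paper, versus $(e,e)$ and a vertex of $Y$ in yours. Your version buys economy by recycling the Section \ref{qsrg-section} computation as a black box; its only debt is the refinement, which is not stated as a lemma in the paper but is indeed extractable from the proofs of Lemma \ref{2_G-H} and Theorem \ref{H_index_2} --- those proofs identify the common neighbourhoods of $(h,e)$, $(e,h)$, $(h,h)$ as exactly $S_1$, $S_2$, $S_3$, and show every other non-adjacent vertex attains $0$ or $2$, values distinguishable from $k=|G|-|H|$ precisely because $k\geq 3$. The paper's version is self-contained within the lemma and never needs to upgrade the bare $c$-set values into a statement about which vertices attain them.
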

	\begin{proof}
		As $|G| \geq 5$, we have $k \geq 3$. Again, as $X\subset S$, any vertices in $X$ is either of Type $1$ or $2$ or $3$ (defined in Section \ref{qsrg-section}). Suppose $X$ contains a Type $1$ vertex, say $(a,e)$. If possible, let $(e,a') \in X$. Clearly $a' \neq a^{-1}$ as $(a,e)\not\sim (e,a')$. As $[G:H]=2$, we have $Ha =Ha'$. Let $(x,y)$ be a common neighbour of $(a,e)$ and $(e,a')$. 
		
		From the adjacency conditions we get $(x,y)=(a,a')$. Thus $(a,e)$ and $(e,a')$ has at most one common neighbour in $Y$.  On the other hand, as $L$ is a complete bipartite graph, any two vertices in $X$ must have exactly $k-1\geq 2$ common neighbours in $Y$, a contradiction. Hence $(e,a') \notin X$, i.e., $X$ does not contain any Type $2$ vertices. Similarly we can prove that $X$ does not contain any Type $3$ vertices. Thus $X$ consists of only Type $1$ vertices. As $|X|=k$, we have $X=S_1$, the set of all Type $1$ vertices. Now, as any vertex in $Y$ is adjacent to all vertices in $X$, we have $Y \subseteq H_1$. As $|Y|=k-1$, we get $Y=H_1$. Hence $L=\langle C_1 \rangle$. 
		
		Similarly, if we start with a Type $2$ or Type $3$ vertex in $X$, we get $L=\langle C_2 \rangle$ and $\langle C_3 \rangle$ respectively. Hence the theorem follows.
	\end{proof}
	
	\begin{lemma}\label{complete_l+1_partite}
		Let $[G:H]=l+1>2$ and $|H|=k$. Let $L=(X_1,X_2,\ldots, X_l,Y)$ be any complete $(l+1)$-partite subgraph of $\Gamma_H(G)$ such that $X_i \subseteq S$ with $|X_i|=k$ for $i=1,2,\ldots, l$ and $Y \subset G \times G \setminus \{ (e,e) \cup S \}$ with $|Y|=k-1$. Then $L=\langle C_1 \rangle$ or $\langle C_2 \rangle$ or $\langle C_3 \rangle$. 
	\end{lemma}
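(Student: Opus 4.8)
The plan is to prove that the whole set $\bigcup_{i=1}^{l}X_i$ consists of vertices of a single type $t\in\{1,2,3\}$, that it then fills all of $S_t$, and that $Y=H_t$. Given this, $L$ is the complete $(l+1)$-partite graph on the parts $S_t^1,\dots,S_t^l,H_t$, which is exactly $\langle C_t\rangle$ by Lemma \ref{complete_l_partite} and the observation preceding this lemma that $\langle C_t\rangle$ is complete $(l+1)$-partite on these parts. So the whole game is to pin down the type of $\bigcup_i X_i$ and the location of $Y$.

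I would begin by recording the local adjacency structure inside $S$. A direct check on the connection set shows that every Type-$1$ vertex $(a,e)$ has exactly one Type-$2$ neighbour, namely $(e,a^{-1})$, and exactly one Type-$3$ neighbour, namely $(a,a)$, with the symmetric statements for the other two types; within a fixed type, adjacency is governed by cosets as in Lemma \ref{complete_l_partite}. Next I would characterise the vertices adjacent to \emph{many} Type-$1$ vertices: if $w=(x,z)$ with $z\neq e$, then $w$ is adjacent to at most two Type-$1$ vertices, so any $w$ adjacent to three or more Type-$1$ vertices must have $z=e$; since $w\notin S\cup\{(e,e)\}$, this forces $x\in H\setminus\{e\}$, i.e. $w\in H_1$. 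A further short computation shows that such a $w\in H_1$ is adjacent to no Type-$2$ and no Type-$3$ vertex at all. The analogous facts hold for the other two types by symmetry.

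The main step then runs as follows. Suppose $\bigcup_i X_i$ contains at least three vertices of some type, say Type $1$. Every $y\in Y$ is adjacent in $L$, hence in $\Gamma_H(G)$, to all of $\bigcup_i X_i$, in particular to these three Type-$1$ vertices, so $y\in H_1$, and therefore $y$ is adjacent to no Type-$2$ or Type-$3$ vertex. Since $y$ must nonetheless be adjacent to every vertex of $\bigcup_i X_i$, this set can contain no Type-$2$ or Type-$3$ vertex; hence $\bigcup_i X_i\subseteq S_1$, and comparing cardinalities, $|\bigcup_i X_i|=lk=|S_1|$, gives $\bigcup_i X_i=S_1$. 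The same comparison with $Y\subseteq H_1$ and $|Y|=k-1=|H_1|$ yields $Y=H_1$. Finally, distinct parts $X_i$ are completely joined in $L\subseteq\Gamma_H(G)$, so any two of their vertices lie in distinct right cosets; as each $|X_i|=k$ equals the coset size and there are $l$ cosets and $l$ parts, each $X_i$ is exactly one of the $S_1^j$, and $L=\langle C_1\rangle$ follows.

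It remains to treat the case in which no type is represented three or more times in $\bigcup_i X_i$. Then each type occurs at most twice, so $lk=|\bigcup_i X_i|\le 6$; since $l\ge 2$ and $k\ge 2$, this leaves only the finitely many configurations $(l,k)\in\{(2,2),(3,2),(2,3)\}$. I expect this to be the main obstacle: here the counting above is inconclusive, and, because $L$ need not be an induced subgraph, two vertices lying in a common part may still be adjacent in $\Gamma_H(G)$, so one cannot simply read off the partition into cosets. These remaining low-order cases would be settled by a direct, hands-on analysis: using that each of the at most two vertices of $Y$ must be simultaneously adjacent to all (at most two) representatives of every type present, the constraints on a putative $y=(x,z)$ become over-determined and exclude any genuinely mixed-type configuration, after which the cardinality and coset arguments of the previous paragraph finish the proof.
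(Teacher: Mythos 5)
Your proof is correct in its main line, and it takes a genuinely different route from the paper's. The paper first proves a standalone claim entirely inside $S$: any $T\subseteq S$ whose \emph{induced} subgraph is $K_{k,\ldots,k}$ ($l$ parts) must equal $S_1$, $S_2$ or $S_3$. Its engine is a common-neighbour count: two vertices of different types lying in one part would need $(l-1)k$ common neighbours in $S$ but can have at most three, forcing $(l-1)k\le 3$ and leaving the orders $|G|\in\{5,6,9\}$ to a manual check; only afterwards is $Y$ used, to force $Y=H_i$. You instead bring $Y$ in from the start: your key observation is that a vertex outside $S\cup\{(e,e)\}$ with three or more Type-$t$ neighbours must lie in $H_t$, and that $H_t$-vertices see no vertices of the other two types; a single $y\in Y$ (nonempty since $k\ge 2$) then collapses $\bigcup_i X_i$ into one type, after which your cardinality and coset arguments correctly finish the job. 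Pigeonhole covers $lk\ge 7$, leaving $(l,k)\in\{(2,2),(3,2),(2,3)\}$. What each approach buys: yours uses only cross-part adjacencies, never within-part non-adjacency, so it remains valid if $L$ is merely a subgraph isomorphic to a complete $(l+1)$-partite graph rather than an induced one --- exactly the ambiguity you flag, and one the paper's proof silently resolves in the induced sense (it deduces $(a_1,e)\nsim(e,x)$ from the two vertices sharing a part). Conversely, the paper's intermediate claim about $S$ is reused later, just before Lemma \ref{Aut(G)}, to show that elements of the vertex stabilizer permute $\langle S_1\rangle,\langle S_2\rangle,\langle S_3\rangle$; your proof does not deliver that statement as a separate tool. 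Also, the paper's threshold $(l-1)k>3$ is sharper than your $lk>6$ at $(l,k)=(3,2)$: for nontrivial $H$ its exceptional orders are $6$ and $9$, whereas yours are $6$, $8$ and $9$, so you acquire an extra family of order-$8$ checks.

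A few caveats, none fatal. First, your deferral of the small cases to a ``direct, hands-on analysis'' is exactly as unexecuted as the paper's ``it can be checked manually''; a self-contained write-up would have to carry out those checks (for you: all groups of orders $6$, $8$, $9$ with $|H|=2,2,3$ respectively). Second, your appeal to ``symmetry'' for the Type-$3$ versions of the two local facts needs care: $\sigma$ legitimately transports Type $1$ to Type $2$, but the type-rotating map $\alpha(x,y)=(y^{-1},y^{-1}x)$ is a graph automorphism only when $H$ is normal, so for non-normal $H$ the Type-$3$ statements must be verified by the same direct computation as Type $1$ (they do hold: $(g,g)\sim(x,z)$ forces $g\in\{x,z\}$ whenever $x\neq z$). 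Third, your argument needs $k\ge 2$ so that $Y\neq\emptyset$; this is consistent with the section's standing assumption that $H$ is a nontrivial proper subgroup, but it is worth stating explicitly.
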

	\begin{proof}
		We first begin by proving the following claim:
		
		{\it Claim:} If $T \subseteq S$ such that $\langle T \rangle \cong K_{ \underbrace{ k,k,\ldots , k}_{l~  times}}$, then $T=S_1 $ or $S_2$ or $S_3$.
		
		{\it Proof of Claim:} We begin the proof by finding all the neighbours of the vertices $(a,e), (e,a)$ and $(a,a)$ in $S$ where $a \notin H$. 
		
		Let $(a,e) \sim (x,y)$. Hence $(xa^{-1},y) \in S$, i.e., either $xa^{-1} \notin H$ and $y=e$ or $xa^{-1}=e$ and $y \notin H$ or $xa^{-1}=y \notin H$. So, $(a,e)$ has $lk$ many neighbours of the form $(x,e)$ where $x \notin Ha$ and two other neighbours $(e,a^{-1})$, $(a,a)$ in $S$. 
		
		Similarly $(e,a)$ has $lk$ many neighbours of the form $(e,x)$ where $x \notin Ha$ and others are $(a^{-1},e)$, $(a,a)$ in $S$, and $(a,a)$ has $lk$ many neighbours of the form $(x,x)$ where $x \notin Ha$ and others are $(e,a)$, $(a,e)$ in $S$.
		
		Let $A_1, A_2, \ldots , A_l$ be the partite sets of $\langle T \rangle$. Let $(a_1,e) \in A_1$. If possible, let $(e,x) \in A_1$. Hence $(a_1,e) \nsim (e,x)$ and both have $(l-1)k$ many common neighbours in $S$. But, it can be shown  that $(a_1,e)$ and $(e,x)$ can have at most three common neighbours in $S$. If $(l-1)k>3$, we get a contradiction. Note that as $[G:H]=l+1>2$, the case $l=1$ does not arise. If $1\leq (l-1)k\leq 3$, we have finitely many cases where $|G|=3,4,5,6$ or $9$. Again, as $|G|\geq 5$, we are left with the case $|G|=5$ or $6$ or $9$. It can be checked manually that the lemma holds for groups of orders $5,6$ and $9$. Hence $(e,x) \notin A_1 $ for all $x \notin H$. 
		
		Similarly we can prove $(x,x) \notin A_1$ for all $x \notin H$. As $|A_1|=k$, the only possibility is $(x,e) \in A_1$ for all $x \in Ha_1$, i.e., $A_1=S_1^1$. Now by renaming suitably we can easily prove that $A_i=S_1^i$ for all $i =1,2,\ldots , l$. Hence $T=\langle S_1 \rangle$. 
		
		Similarly, if we start with the vertex $(e,a_1)$ and $(a_1,a_1)$ then we have   $T=\langle S_2 \rangle$ and $\langle S_3 \rangle$ respectively. Hence the claim follows.	
		
		Now consider $L=(X_1,X_2,\ldots, X_l,Y)$ to be any complete $(l+1)$-partite subgraph of $\Gamma_H(G)$ such that $X_i \subseteq S$ with $|X_i|=k$ for $i=1,2,\ldots, l$ and $Y \subset G \times G \setminus \{ (e,e) \cup S \}$ with $|Y|=k-1$. From the above claim, $L$ must contain $\langle S_1 \rangle$ or $\langle S_2 \rangle$ or $\langle S_3 \rangle$ as an induced subgraph. Hence if $L$ contains $\langle S_i \rangle$, then $Y$ must be equal to $H_i$, i.e., $L=\langle C_i \rangle$, for $i=1,2$ or $3$.	This completes the proof.
	\end{proof}

	Let $\mathcal{G}=Aut(\Gamma_H(G))$. As $\Gamma_H(G)$ is a Cayley graph on $G \times G$, $\mathcal{G}$ contains a copy of $G \times G$, namely $\{\psi_{(a,b)}: a,b \in G\}$, where  $\psi_{(a,b)}:G \times G \rightarrow G\times G$ is defined by $\psi_{(a,b)}(x,y)=(ax,by)$ with $a,b \in G$, is the left translation map. We start by noting two other automorphisms $\sigma$ and  $\alpha$ of $\Gamma(G)$ defined by,
	$$\sigma(x,y)=(y,x), ~\forall x,y \in G$$
	$$\alpha(x,y)=(y^{-1},y^{-1}x), ~\forall x,y \in G.$$ One can check that $\sigma$ is an automorphism of $\Gamma_H(G)$ with $\circ(\sigma)=2$ for all subgroups $H$ of $G$ and $\alpha$ is an automorphism of $\Gamma_H(G)$ with $\circ(\alpha)=3$ when $H$ is normal in $G$. Clearly $\sigma \alpha=\alpha^{-1}\sigma$, i.e., $\langle \alpha,\sigma \rangle \cong Sym(3)$, the symmetric group on three symbols. Let $Aut_H(G)=\{ f \in Aut(G) : f(H)=H \}$ be the set of all group automorphisms which fixes $H$ set-wise. Observe that every $f \in Aut_H(G)$  induces a graph automorphism $\tilde{f}:\Gamma_H(G) \rightarrow \Gamma_H(G)$ given by $$\tilde{f}(x,y)=(f(x),f(y)).$$
	
	In fact, $\iota: Aut_H(G)\rightarrow Aut(\Gamma_H(G))$ given by $\iota(f)=\tilde{f}$ is an injective group homomorphism. Thus $Aut_H(G)$ can be thought of as a subgroup of $Aut(\Gamma_H(G))$. Moreover, $\tilde{f}$ commutes with both $\alpha$ and $\sigma$ for all $f \in Aut_H(G)$ and $Aut_H(G)\cap \langle \alpha,\sigma\rangle$ is trivial. Thus  $Aut_H(G)\times Sym(3)$ is a subgroup of $\mathcal{G}$ when $H$ is normal in $G$ and for rest of the cases $Aut_H(G)\times \mathbb{Z}_2$  is a subgroup of $\mathcal{G}$. 
	
	Let $\mathcal{G}_0$ be the stabilizer of the vertex $(e,e)$ in $\mathcal{G}$. It is easy to observe that $Aut_H(G)\times Sym(3)$ is contained in $\mathcal{G}_0$  when $H$ is normal in $G$ and for rest of the cases, $Aut_H(G)\times \mathbb{Z}_2$ is a subgroup of $\mathcal{G}_0$.  Therefore any automorphism in $\mathcal{G}_0$ fixes the neighbourhood of $(e,e)$, $S$, as shown in Figure \ref{picture}. In fact, by Lemma \ref{complete_l_partite} and Claim of Lemma \ref{complete_l+1_partite}, any automorphism in $\mathcal{G}_0$ permutes $\langle S_1 \rangle$, $\langle S_2 \rangle$ and $\langle S_3 \rangle$. Now, by Lemma \ref{complete_bi_partite} and Lemma \ref{complete_l+1_partite}, it follows that any automorphism in $\mathcal{G}_0$ permutes $\langle C_1 \rangle$, $\langle C_2 \rangle$ and $\langle C_3 \rangle$. In the next lemma, we prove that any automorphism in $\mathcal{G}_0$ which fixes $\langle C_1 \rangle$, $\langle C_2 \rangle$ and $\langle C_3 \rangle$ belongs to $Aut_H(G)$.
	
	\begin{lemma}\label{Aut(G)}
		Let  $\varphi \in \mathcal{G}_0$ such that $\varphi(C_i)=C_i$ for $i=1,2,3$. Then $\varphi \in Aut_H(G)$.
	\end{lemma}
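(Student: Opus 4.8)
The plan is to reconstruct, out of the three ``coordinate lines'' that $\varphi$ must preserve, a single group automorphism $f$ of $G$ with $f(H)=H$, and then to show that $\varphi$ is exactly the induced map $\tilde f$. First I would use the partite structure: since $\varphi(C_i)=C_i$ and $\varphi$ respects adjacency, it restricts to an automorphism of each induced graph $\langle C_i\rangle$, which by the remark following Lemma~\ref{complete_l_partite} is complete $(l+1)$-partite with partite sets $H_i$ (of size $k-1$) and $S_i^1,\dots,S_i^l$ (each of size $k$). As $k-1<k$, the set $H_i$ is the unique smallest partite set, so $\varphi(H_i)=H_i$ and $\varphi(S_i)=S_i$. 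This produces bijections $\phi_1,\phi_2,\phi_3\colon G\to G$ fixing $e$ with $\phi_i(H)=H$, defined by $\varphi(g,e)=(\phi_1(g),e)$, $\varphi(e,g)=(e,\phi_2(g))$ and $\varphi(g,g)=(\phi_3(g),\phi_3(g))$. To force the three to agree off $H$, I would feed the adjacencies $(g,e)\sim(g,g)$ and $(e,g)\sim(g,g)$ (valid for $g\in G\setminus H$) through $\varphi$ and read off the membership condition in $S$, which yields $\phi_1(g)=\phi_3(g)=\phi_2(g)$ for all $g\in G\setminus H$.

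The first substantive step is to push this agreement onto $H$. For $h\in H\setminus\{e\}$ I would compute the full neighbourhood of the $H_1$-vertex $(h,e)$: apart from $S_1$, it splits into the two families $A=\{(h,y):y\in G\setminus H\}$ and $B=\{(x,xh^{-1}):x\in G\setminus H\}$. These are separated graph-theoretically, since a vertex of $A$ has no neighbour in $C_2$ whereas a vertex of $B$ does; as $\varphi$ preserves $C_2$ and adjacency, it must carry $A$ onto the corresponding family for $(\phi_1(h),e)$. Each vertex of $A$ has exactly one neighbour in $S_3$ and exactly one in $H_3$, and tracking these through $\varphi$ (using the previous step for the $S_3$-neighbour) pins down $\phi_1(h)=\phi_3(h)$; the mirror computation at $(e,h)$ gives $\phi_2(h)=\phi_3(h)$. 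This produces a single bijection $f:=\phi_1=\phi_2=\phi_3$ of $G$ fixing $e$ with $f(H)=H$, and $\varphi$ then agrees with the candidate $\tilde f$ on all of $C_1\cup C_2\cup C_3$.

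Next I would show $f$ is multiplicative. For $g_1,g_2\in G\setminus H$ with $Hg_1\neq Hg_2$, the off-line vertex $(g_1,g_2)$ has $C_1$-neighbours $(g_1,e),(g_2^{-1}g_1,e)$ and $C_3$-neighbours $(g_1,g_1),(g_2,g_2)$; feeding these through $\varphi$ (the $C_3$-pair recovers the unordered pair $\{f(g_1),f(g_2)\}$, after which the $C_1$-pair recovers the product) yields $f(g_2^{-1}g_1)=f(g_2)^{-1}f(g_1)$, i.e. $f(ab)=f(a)f(b)$ on a large subset. Since $[G:H]\geq 2$, I would then route an arbitrary product through elements of $G\setminus H$ and use $f(H)=H$ together with associativity to extend the homomorphism property to all of $G$, checking the handful of degenerate configurations directly. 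Hence $f\in Aut_H(G)$ and $\tilde f\in\mathcal{G}_0$.

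Finally I would set $\psi=\tilde f^{-1}\varphi\in\mathcal{G}_0$, which by construction fixes every vertex of $C_1\cup C_2\cup C_3$ pointwise, and argue $\psi=\mathrm{id}$. Any off-line vertex having at least one coordinate outside $H$ possesses a nonempty set of (now fixed) neighbours on the three lines that determines it uniquely, so $\psi$ fixes all of these. I expect the \emph{main obstacle} to be the remaining vertices $(h,h')$ with $h,h'\in H\setminus\{e\}$: these have no neighbour on any of the three lines whatsoever, so they are invisible to the lines and cannot be pinned directly. I would handle them indirectly, noting that the two families $\{(h,y):y\in G\setminus H\}$ and $\{(x,h'):x\in G\setminus H\}$ — already fixed pointwise by $\psi$ — have as their respective common-neighbour sets $\{(h,q):q\in H\}$ and $\{(p,h'):p\in H\}$, whose intersection is the single vertex $(h,h')$. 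This fixes $(h,h')$ as well, giving $\psi=\mathrm{id}$ and therefore $\varphi=\tilde f\in Aut_H(G)$.
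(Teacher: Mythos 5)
Most of your architecture is sound, and it is genuinely different from the paper's proof (the paper determines $\varphi$ class by class, pinning the mixed vertices $(x,y)$ with $x\notin H$, $y\in H$ \emph{first}, and only then the vertices with both coordinates outside $H$; you instead go lines $\rightarrow$ multiplicativity $\rightarrow$ rigidity via $\psi=\tilde f^{-1}\varphi$). Your steps 1--3 and the common-neighbour trick for the vertices $(h,h')$ are correct. But there is a genuine gap, and it strikes at one specific configuration. Suppose $g\in G\setminus H$ has order $3$ (e.g.\ $G=\mathbb{Z}_3\times\mathbb{Z}_3$ with $|H|=3$, or $G=Sym(3)$ with $|H|=2$). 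Since $g^{-1}=g^{2}$, the two off-line vertices $(g,g^{-1})$ and $(g^{-1},g)$ have \emph{identical} line-neighbourhoods: each is adjacent to exactly $(g,e)$, $(g^{-1},e)$, $(e,g)$, $(e,g^{-1})$, $(g,g)$, $(g^{-1},g^{-1})$ and to nothing else in $C_1\cup C_2\cup C_3$, a set invariant under $g\leftrightarrow g^{-1}$. (One can check these are the \emph{only} such ``line-twins'' among vertices with a coordinate outside $H$: for any other pair, comparing $C_1$- with $C_2$-neighbours excludes the swap.) This falsifies the key claim of your final step, that every off-line vertex with a coordinate outside $H$ ``possesses a set of neighbours on the three lines that determines it uniquely'': as far as line-neighbours are concerned, $\psi$ could transpose $(g,g^{-1})$ and $(g^{-1},g)$, so you have not shown $\psi=\mathrm{id}$. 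The same ambiguity infects the justification in your multiplicativity step (``the $C_1$-pair recovers the product'' presumes the orientation is pinned); there the conclusion can be salvaged, because in the swapped case the matching of $C_1$- and $C_2$-pairs forces $f(g^{-1})=f(g)^{-1}$ and $f(g)^3=e$, which makes the desired relation hold anyway, but this case analysis is missing from your write-up.

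The repair is short but essential, and it in effect reproduces the paper's ordering. Mixed vertices are line-rigid: for $h\in H\setminus\{e\}$ and $y\notin H$, the vertex $(h,y)$ has $C_1$- but no $C_2$-neighbours and $(y,h)$ the reverse, so $\psi$ fixes all of them; and $(g,h)$ is adjacent to $(g,g^{-1})$ but not to $(g^{-1},g)$, since the difference $(g^{-1},hg^{-1})$ lies in $S$ only when $h=e$. Using a fixed mixed vertex as an extra anchor therefore breaks the tie and completes your step 5. This is precisely why the paper's proof is immune to the problem: it determines $\varphi$ on the vertices $(x,y)$, $x\notin H$, $y\in H\setminus\{e\}$, before treating $(x,z)$ with $x,z\notin H$, and its adjacency chains for the latter always pass through such a mixed vertex.
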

	\begin{proof}
		Let $x \notin H$, consider the path $P: (x,e) \sim (x,x) \sim (e,x)$. Let $\varphi((x,e))=(a_x,e)$, $\varphi((x,x))=(b_x,b_x)$ and $\varphi((e,x))=(e,c_x)$. Therefore $\varphi(P): (a_x,e) \sim (b_x,b_x) \sim (e,c_x)$. From the adjacency condition one can easily check that $a_x=b_x=c_x \notin H$.
		
		Now let $x \notin H$ and  $y \in H$. Consider the adjacencies 	
		$$\begin{array}{ccc}
		(x,x) \sim  (x,y)  \sim (e,y) & \varphi &(a_x,a_x) \sim  (x',y') \sim (e,c_y)\\
		\wr & \longrightarrow &  \wr\\
		(y,y) & &(b_y,b_y)
		\end{array}.$$
		Clearly $a_x \notin H$, $b_y, c_y \in H$ and $x' \neq y'$, $x',y' \neq e$. From the adjacency condition we have  $(x'a_x^{-1}, y'a_x^{-1})$, $(x'b_y^{-1}, y'b_y^{-1})$ and $(x',y'c_y^{-1}) \in S$. Hence we have  $x'=a_x$, $y'=c_y$ and $b_y=c_y$ i.e, $\varphi((x,y))=(a_x,b_y)$. Similarly we can show $\varphi((y,x))=(b_y,a_x)$.
		
		Now let $x, z \notin H$, $x \neq z$ and $y \in H$.  Consider the adjacencies $$\begin{array}{ccc}
		(x,y) & &(a_x,b_y)\\
		\wr & \varphi &  \wr\\
		(e,z) \sim  (x,z)  \sim (x,e) & \longrightarrow &(e,a_z) \sim  (x',z') \sim (a_x,e)\\
		\wr &  &  \wr\\
		(y,z) & &(b_y,a_z)
		\end{array}.$$
		Clearly $a_x, a_z \notin H$, $b_y \in H$ and $x' \neq z'$, $x',z' \neq e$. From the adjacency conditions we can have $\varphi((x,z))=(a_x,a_z)$.
		
		At last, let $x,y \in H$, $x \neq y$ and $z \notin H$. Consider the path $P: (x,z) \sim (x,y) \sim (z,y)$ and $\varphi(P): (b_x,a_z) \sim (x',y') \sim (a_z,b_y)$. Clearly $b_x, b_y \in H$, $a_z \notin H$ and $x' \neq y' $, $x',y' \in H \setminus \{e\}$.  From the adjacency conditions we can have $\varphi((x,y))=(b_x,b_y)$.
		
		For $x \notin H$ and $y \in H$  we have $(y,yx^{-1}) \sim (x,e)$. Let $\varphi((y,yx^{-1}))=(a_y,a_{xy^{-1}})$ and $\varphi((x,e))=(a_x,e)$. Therefore $(a_y,a_{yx^{-1}}) \sim (a_x,e)$, i.e., $(a_y a_x^{-1}, a_{yx^{-1}}) \in S$. As $x \neq y$, $a_x \neq a_y$ and $a_{yx^{-1}}\neq e$. Therefore from the adjacency condition we have $a_y a_x^{-1} = a_{yx^{-1}}$. Now take $y=e$, so $a_x^{-1}=a_{x^{-1}}$ and hence $a_y a_{x^{-1}} = a_{yx^{-1}}$. Finally, putting $x$ in place of $x^{-1}$, we get $a_ya_x=a_{yx}$.  As $\varphi((x,y))=(a_x,a_y)$, considering $\varphi$ as $(\varphi_1,\varphi_2)$, it is clear that $\varphi_1=\varphi_2=f$, where $f \in Aut_H(G)$. Thus $\varphi=\tilde{f}$, i.e., $\varphi \in Aut_H(G)$.
	\end{proof}
	
	\begin{corollary}\label{G_01}
		If $H$ is normal in $G$ then $\mathcal{G}_0\cong   Aut_H(G) \times Sym(3)$.
	\end{corollary}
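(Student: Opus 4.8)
The plan is to prove the containment $\mathcal{G}_0 \subseteq Aut_H(G)\cdot\langle\alpha,\sigma\rangle$, since the reverse inclusion $Aut_H(G)\times\langle\alpha,\sigma\rangle \subseteq \mathcal{G}_0$ (with $\langle\alpha,\sigma\rangle\cong Sym(3)$ when $H$ is normal) has already been recorded in the discussion preceding Lemma \ref{Aut(G)}. The guiding idea is that an arbitrary $\varphi\in\mathcal{G}_0$ acts on the three ``branches'' $C_1,C_2,C_3$ by some permutation, and I can ``straighten out'' that permutation by pre-composing with a suitable element of $\langle\alpha,\sigma\rangle$, after which Lemma \ref{Aut(G)} applies. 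First I would record that, by Lemma \ref{complete_bi_partite} and Lemma \ref{complete_l+1_partite} (together with Lemma \ref{complete_l_partite} and the Claim of Lemma \ref{complete_l+1_partite}), every $\varphi\in\mathcal{G}_0$ permutes $\{C_1,C_2,C_3\}$, say $\varphi(C_i)=C_{\pi(i)}$ for some $\pi\in Sym(\{1,2,3\})$.

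The key computation is to identify the action of $\langle\alpha,\sigma\rangle$ on the three branches. Evaluating the generators on $S_1,S_2,S_3$ (and hence on $C_1,C_2,C_3$, which are permuted the same way) one finds $\sigma(S_1)=S_2$, $\sigma(S_2)=S_1$, $\sigma(S_3)=S_3$, so $\sigma$ induces the transposition $(1\,2)$; and $\alpha(S_1)=S_2$, $\alpha(S_2)=S_3$, $\alpha(S_3)=S_1$ (using that $g\notin H \Leftrightarrow g^{-1}\notin H$), so $\alpha$ induces the $3$-cycle $(1\,2\,3)$. Consequently the homomorphism $\langle\alpha,\sigma\rangle\to Sym(\{1,2,3\})$ recording the induced permutation of the branches hits both a transposition and a $3$-cycle, hence is surjective; as both groups have order $6$, it is an isomorphism. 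This is the step I expect to be the main obstacle, in the sense that it is the nonformal input: it is exactly the fact that the known symmetries already realize the full symmetric group on the three branches that makes the correction step below possible.

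Given $\varphi\in\mathcal{G}_0$ with $\varphi(C_i)=C_{\pi(i)}$, I choose the $\tau\in\langle\alpha,\sigma\rangle$ inducing the same permutation $\pi$ of the branches. Then $\tau^{-1}\varphi\in\mathcal{G}_0$ and it fixes each $C_i$ setwise, so by Lemma \ref{Aut(G)} we get $\tau^{-1}\varphi=\tilde{f}$ for some $f\in Aut_H(G)$, i.e.\ $\varphi=\tau\,\tilde{f}\in\langle\alpha,\sigma\rangle\cdot Aut_H(G)$. This yields $\mathcal{G}_0=Aut_H(G)\cdot\langle\alpha,\sigma\rangle$. Finally I would upgrade this product to a direct product: it was already observed that every $\tilde{f}$ commutes with both $\alpha$ and $\sigma$ and that $Aut_H(G)\cap\langle\alpha,\sigma\rangle$ is trivial, so the two subgroups commute elementwise (hence each is normal in $\mathcal{G}_0$), intersect trivially, and have product $\mathcal{G}_0$. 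By the internal direct product criterion, $\mathcal{G}_0\cong Aut_H(G)\times\langle\alpha,\sigma\rangle\cong Aut_H(G)\times Sym(3)$, which is the assertion.
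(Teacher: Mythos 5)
Your proposal is correct and is essentially the paper's own argument in a different dressing: the paper defines the evaluation homomorphism $\Psi:\mathcal{G}_0\to Sym(\{C_1,C_2,C_3\})$, notes it is onto (which is exactly your computation that $\sigma$ and $\alpha$ induce the transposition $(1\,2)$ and the $3$-cycle $(1\,2\,3)$ on the branches), identifies $Ker(\Psi)=Aut_H(G)$ via Lemma \ref{Aut(G)}, and concludes by comparing $|\mathcal{G}_0|=|Aut_H(G)|\cdot 6$ with the order of the already-known subgroup $Aut_H(G)\times Sym(3)$. Your coset-correction step ($\varphi=\tau\tilde{f}$) and the internal direct product criterion are just the explicit splitting form of that same kernel--image argument, relying on the same two inputs (Lemma \ref{Aut(G)} and the realization of all six branch permutations by $\langle\alpha,\sigma\rangle$).
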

	\begin{proof}
		Let $C=\{C_1,C_2,C_3\}$ and $Sym(C)$ be the symmetric group of the set $C$ which is isomorphic to $Sym(3)$. Define $\Psi: \mathcal{G}_0 \rightarrow Sym(C)$ such that $\Psi(f): C \rightarrow C$, $\Psi(f)(C_i)=f(C_i)$, $i=1,2,3$. One can easily check that $\Psi$ is a onto group homomorphism.  From the Lemma \ref{Aut(G)} we have  $ Ker(\Psi)=Aut_H(G) $, therefore  we have $ \mathcal{G}_0/Ker(\Psi) \cong   Sym(C)$, i.e., $\vert  \mathcal{G}_0 \vert=\vert Aut_H(G) \vert \vert Sym(C) \vert$. We already have $ Aut_H(G) \times Sym(3) \leq \mathcal{G}_0$, hence $\mathcal{G}_0 \cong  Aut_H(G) \times Sym(3)$.
	\end{proof}
	
	\begin{corollary}\label{G_02}
		If $H$ is not normal in $G$, then $\mathcal{G}_0\cong   Aut_H(G)\times \mathbb{Z}_2$.
	\end{corollary}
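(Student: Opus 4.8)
The plan is to reuse the homomorphism $\Psi\colon \mathcal{G}_0 \to Sym(C)$ introduced in the proof of Corollary \ref{G_01}, where $C=\{C_1,C_2,C_3\}$ and $\Psi(\varphi)$ records how $\varphi$ permutes the three sets $C_1,C_2,C_3$. This map is well defined because, as established in the paragraph preceding Lemma \ref{Aut(G)} (via Lemmas \ref{complete_l_partite}, \ref{complete_bi_partite}, \ref{complete_l+1_partite} and the Claim), every element of $\mathcal{G}_0$ permutes $\langle C_1\rangle,\langle C_2\rangle,\langle C_3\rangle$. Exactly as before, $\Psi$ is a group homomorphism and, by Lemma \ref{Aut(G)}, its kernel is precisely $Aut_H(G)$. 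The corollary therefore reduces to computing $Im(\Psi)$: once we know $Im(\Psi)\cong\mathbb{Z}_2$, the first isomorphism theorem gives $|\mathcal{G}_0|=2\,|Aut_H(G)|$, and since $Aut_H(G)\times\mathbb{Z}_2$ (with the $\mathbb{Z}_2$ generated by $\sigma$) is already a subgroup of $\mathcal{G}_0$ of this same order, equality of orders yields $\mathcal{G}_0\cong Aut_H(G)\times\mathbb{Z}_2$.

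First I would pin the image from below and above. Since $\sigma(x,y)=(y,x)$ sends $C_1\to C_2$, $C_2\to C_1$ and fixes $C_3$, we have $\Psi(\sigma)=(C_1\,C_2)$, so $Im(\Psi)$ is a subgroup of $Sym(3)$ containing the transposition $(C_1\,C_2)$. The only subgroups of $Sym(3)$ containing this transposition are $\langle (C_1\,C_2)\rangle\cong\mathbb{Z}_2$ and all of $Sym(3)$ (the order-$3$ subgroup contains no transposition). Thus it suffices to exclude the case $Im(\Psi)=Sym(3)$, i.e.\ to show that when $H$ is non-normal no element of $\mathcal{G}_0$ induces a $3$-cycle on $\{C_1,C_2,C_3\}$ — equivalently, that no automorphism fixing $(e,e)$ moves $C_3$.

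The heart of the argument, and the step I expect to be the main obstacle, is this exclusion. Suppose for contradiction that $Im(\Psi)=Sym(3)$; then it contains the $3$-cycle $(C_1\,C_2\,C_3)$, so there is $\varphi\in\mathcal{G}_0$ with $\varphi(C_1)=C_2$, $\varphi(C_2)=C_3$, $\varphi(C_3)=C_1$. I would then rerun the local, path-tracking analysis of Lemma \ref{Aut(G)} — following adjacencies along paths such as $(x,e)\sim(x,x)\sim(e,x)$ and using the explicit form of $S$ — but now with $\varphi$ cycling the three types. The adjacency constraints should force $\varphi$ to act by a single group map, namely $\varphi(x,y)=\big(f(y)^{-1},\,f(y)^{-1}f(x)\big)$ for some $f\in Aut_H(G)$; in other words $\varphi=\alpha\,\tilde f$ in the notation of the excerpt. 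But $\alpha$ preserves adjacency only when conjugation carries $G\setminus H$ into itself, that is, when $y^{-1}gy\notin H$ for every $g\in G\setminus H$ and every $y\in G$, which is exactly the normality of $H$ in $G$. This contradicts the hypothesis that $H$ is not normal, so no such $\varphi$ exists.

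Hence $Im(\Psi)=\langle (C_1\,C_2)\rangle\cong\mathbb{Z}_2$, and the counting argument of the first paragraph finishes the proof. The delicate point throughout is the reconstruction of $\varphi$ from its action on the $C_i$: one must verify that the adjacency conditions pin $\varphi$ down to $\alpha$ twisted by a genuine group automorphism fixing $H$ (as opposed to some more exotic permutation of $G\times G$), so that the obstruction to $\alpha$ being a graph automorphism — the normality of $H$ — is genuinely inherited by $\varphi$. Everything else is formal bookkeeping with $\Psi$ and orders.
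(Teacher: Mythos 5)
Your proposal is correct, and its skeleton is exactly the paper's: the paper's entire proof of Corollary \ref{G_02} is the sentence ``the proof follows similarly as the previous corollary,'' and your $\Psi$/kernel/order-counting structure is precisely that of Corollary \ref{G_01}. What you add, correctly, is the observation that ``similarly'' hides one step with no analogue in the normal case: there one only needs $\Psi$ to be \emph{onto} $Sym(3)$, witnessed by the explicit automorphisms $\alpha$ and $\sigma$, whereas here one needs the upper bound $Im(\Psi)\leq\langle(C_1\,C_2)\rangle$, i.e.\ that no automorphism fixing $(e,e)$ can move $C_3$. Your reduction (the image contains $(C_1\,C_2)$, hence is either $\langle(C_1\,C_2)\rangle$ or all of $Sym(3)$, so it suffices to exclude a $3$-cycle) is right, and your exclusion mechanism --- a $3$-cycle-inducing $\varphi$ would have to be $\alpha\tilde{f}$ with $f\in Aut_H(G)$, while $\alpha$ preserves adjacency if and only if $H$ is normal --- is sound. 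The one weakness is that you leave the reconstruction $\varphi=\alpha\tilde{f}$ as a sketch, and carried out honestly it repeats essentially all of Lemma \ref{Aut(G)}. You can bypass it: suppose $\varphi\in\mathcal{G}_0$ satisfies $\varphi(C_1)=C_2$, $\varphi(C_2)=C_3$, $\varphi(C_3)=C_1$. Tracking the path $(x,e)\sim(x,x)\sim(e,x)$ for $x\notin H$, exactly as in the first step of Lemma \ref{Aut(G)}, forces $\varphi(x,e)=(e,f(x))$ and $\varphi(x,x)=(f(x)^{-1},e)$ for some bijection $f$ of $G\setminus H$ onto itself. Now compare adjacencies inside $S_1$ and inside $S_3$: we have $(x,e)\sim(y,e)$ iff $Hx\neq Hy$, and their images $(e,f(x)),(e,f(y))\in S_2$ are adjacent iff $Hf(x)\neq Hf(y)$; likewise $(x,x)\sim(y,y)$ iff $Hx\neq Hy$, and their images $(f(x)^{-1},e),(f(y)^{-1},e)\in S_1$ are adjacent iff $f(x)^{-1}f(y)\notin H$, i.e.\ iff $f(x)H\neq f(y)H$. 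Since $\varphi$ preserves adjacency and $f$ is onto $G\setminus H$, these two equivalences combine to give $Hu=Hv \iff uH=vH$ for all $u,v\in G\setminus H$, which contradicts Lemma \ref{non-normal-lemma}(1). This settles $Im(\Psi)\cong\mathbb{Z}_2$ in a few lines, without ever proving that $f$ is a group automorphism, and the rest of your counting argument then finishes the corollary exactly as in the paper.
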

	\begin{proof}
		The proof follows similarly as the previous corollary.
	\end{proof}
	
	Now, we are in a position to conclude about the full automorphism groups of the graph $\Gamma_H(G)$.	
	\begin{theorem}
		The automorphism group $\mathcal{G}$ of $\Gamma_H(G)$ is the Zappa-Szep product of $G \times G$ and $\mathcal{G}_0$, i.e., $\mathcal{G}= \mathcal{L}\mathcal{G}_0$ with $|\mathcal{L}\cap \mathcal{G}_0|=1$, where $\mathcal{L}\cong G \times G$ denotes the left-translation subgroup of $\mathcal{G}$ and $\mathcal{G}_0$ is the stabilizer of the vertex $(e,e)$ in $\mathcal{G}$.
	\end{theorem}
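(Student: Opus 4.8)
The plan is to establish that $\mathcal{G}$ factorizes as a product of the left-translation subgroup $\mathcal{L}$ and the stabilizer $\mathcal{G}_0$, with trivial intersection. This is a standard orbit-stabilizer style argument, and the main structural input has already been assembled in the preceding lemmas and corollaries, which identify $\mathcal{G}_0$ completely. First I would verify that $\mathcal{L} = \{\psi_{(a,b)} : a,b \in G\}$ is genuinely a subgroup of $\mathcal{G}$ isomorphic to $G \times G$: the map $(a,b) \mapsto \psi_{(a,b)}$ is a homomorphism since $\psi_{(a,b)} \circ \psi_{(c,d)} = \psi_{(ac,bd)}$, and it is injective because $\psi_{(a,b)}(e,e) = (a,b)$, so distinct pairs give distinct maps. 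Thus $|\mathcal{L}| = |G|^2$.

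Next I would prove the trivial-intersection claim $|\mathcal{L} \cap \mathcal{G}_0| = 1$. Any $\psi_{(a,b)} \in \mathcal{G}_0$ fixes $(e,e)$, so $(a,b) = \psi_{(a,b)}(e,e) = (e,e)$, forcing $\psi_{(a,b)}$ to be the identity. Hence $\mathcal{L} \cap \mathcal{G}_0 = \{\mathrm{id}\}$. The heart of the argument is then the factorization $\mathcal{G} = \mathcal{L}\mathcal{G}_0$, which I would obtain from vertex-transitivity. Since $\Gamma_H(G)$ is a Cayley graph it is vertex-transitive, so for any $\varphi \in \mathcal{G}$ there is some $\psi_{(a,b)} \in \mathcal{L}$ with $\psi_{(a,b)}$ mapping $(e,e)$ to $\varphi(e,e)$; explicitly if $\varphi(e,e) = (a,b)$ take $\psi_{(a,b)}$ itself. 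Then $\psi_{(a,b)}^{-1}\varphi$ fixes $(e,e)$, so it lies in $\mathcal{G}_0$, giving $\varphi = \psi_{(a,b)}\,(\psi_{(a,b)}^{-1}\varphi) \in \mathcal{L}\mathcal{G}_0$. Combined with $\mathcal{L}\mathcal{G}_0 \subseteq \mathcal{G}$, this yields the product decomposition, and the trivial intersection guarantees that every element of $\mathcal{G}$ is written \emph{uniquely} as such a product, so $|\mathcal{G}| = |\mathcal{L}|\,|\mathcal{G}_0|$.

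Finally I would address why the factorization is genuinely a Zappa-Szep (knit) product rather than a semidirect or direct product. The Zappa-Szep product is exactly the situation where a group is the product of two subgroups with trivial intersection, \emph{without} requiring either factor to be normal. So the remaining task is to confirm that neither $\mathcal{L}$ nor $\mathcal{G}_0$ need be normal in $\mathcal{G}$ in general, which is what prevents us from claiming a stronger semidirect structure. I expect the main obstacle to be this last point: one must check that $\mathcal{G}_0$ does not normalize $\mathcal{L}$ in general, equivalently that conjugating a translation by a stabilizer element need not return a translation. A clean way to see the general non-normality is to conjugate $\psi_{(a,b)}$ by the involution $\sigma$ (which lies in $\mathcal{G}_0$): one computes $\sigma \psi_{(a,b)} \sigma^{-1} = \psi_{(b,a)}$, which does stay in $\mathcal{L}$, so $\sigma$ alone normalizes $\mathcal{L}$; the genuinely non-normalizing behavior comes from the automorphism $\alpha$ and from elements of $Aut_H(G)$, where $\tilde f \psi_{(a,b)} \tilde f^{-1} = \psi_{(f(a),f(b))}$ again stays in $\mathcal{L}$, whereas $\alpha \psi_{(a,b)} \alpha^{-1}$ does not have the form of a left translation for general $(a,b)$. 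Verifying that at least one stabilizer generator fails to normalize $\mathcal{L}$ establishes that the decomposition is properly Zappa-Szep and not merely semidirect, completing the proof.
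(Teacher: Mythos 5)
Your core argument is correct and is essentially the paper's own proof: the paper invokes the orbit--stabilizer theorem to get $|\mathcal{G}|=|G\times G|\cdot|\mathcal{G}_0|$ and deduces $\mathcal{G}=\mathcal{L}\mathcal{G}_0$ by counting, whereas you unwind the same argument element-wise (given $\varphi\in\mathcal{G}$, set $(a,b)=\varphi(e,e)$ and write $\varphi=\psi_{(a,b)}\bigl(\psi_{(a,b)}^{-1}\varphi\bigr)$ with the second factor in $\mathcal{G}_0$); the trivial-intersection step is identical in both. Up to that point your proof is complete and matches the paper.

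Your final paragraph, however, rests on a misunderstanding of the term Zappa--Szep product and should be deleted. A Zappa--Szep product requires only $\mathcal{G}=\mathcal{L}\mathcal{G}_0$ with $\mathcal{L}\cap\mathcal{G}_0=\{\mathrm{id}\}$; it does \emph{not} require that the factors be non-normal --- semidirect and direct products are special cases, so there is nothing left to verify after the factorization. Worse, the verification you propose (``at least one stabilizer generator fails to normalize $\mathcal{L}$'') is false in general: when $G$ is abelian one computes $\alpha\psi_{(a,b)}\alpha^{-1}=\psi_{(b^{-1},\,b^{-1}a)}$, and together with $\sigma\psi_{(a,b)}\sigma^{-1}=\psi_{(b,a)}$ and $\tilde{f}\psi_{(a,b)}\tilde{f}^{-1}=\psi_{(f(a),f(b))}$ this shows every generator of $\mathcal{G}_0$ normalizes $\mathcal{L}$; indeed the paper's very next corollary asserts that $\mathcal{L}$ is normal in $\mathcal{G}$ (so $\mathcal{G}\cong\mathcal{L}\rtimes\mathcal{G}_0$) precisely when $G$ is abelian, without contradicting the theorem. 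Had your proof genuinely depended on that last step, it would fail for abelian $G$; since the step is superfluous, the proof stands once that paragraph is removed.
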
	
	\begin{proof}
		Clearly $\mathcal{L}\mathcal{G}_0 \subseteq \mathcal{G}$ and $|\mathcal{L}\cap \mathcal{G}_0|=1$. For the other direction, we invoke the orbit-stabilizer theorem for the action of $\mathcal{G}$ on $G\times G$. Since the action is transitive, we have $|\mathcal{G}|=|G \times G|\cdot |\mathcal{G}_0|$, which implies $\mathcal{G}= \mathcal{L}\mathcal{G}_0$.
	\end{proof}
	\begin{corollary}
		If $G$ is abelian, then  $\mathcal{L}$ is a normal subgroup of $\mathcal{G}$, i.e., $\mathcal{G}\cong \mathcal{L}\rtimes \mathcal{G}_0$.
	\end{corollary}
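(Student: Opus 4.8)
The plan is to show directly that $\mathcal{L}$ is a normal subgroup of $\mathcal{G}$; the claimed internal semidirect decomposition $\mathcal{G}\cong \mathcal{L}\rtimes \mathcal{G}_0$ then follows immediately from the preceding theorem, which already supplies $\mathcal{G}=\mathcal{L}\mathcal{G}_0$ together with $|\mathcal{L}\cap\mathcal{G}_0|=1$. To prove normality, note that for any $g\in\mathcal{G}$ we may write $g=\ell\varphi$ with $\ell\in\mathcal{L}$ and $\varphi\in\mathcal{G}_0$; since $\mathcal{L}$ normalises itself (being a subgroup), it is enough to verify that every element of $\mathcal{G}_0$ normalises $\mathcal{L}$. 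Because $G$ is abelian, the subgroup $H$ is automatically normal, so by Corollary \ref{G_01} we have $\mathcal{G}_0\cong Aut_H(G)\times Sym(3)$, which is generated by the maps $\tilde f$ with $f\in Aut_H(G)$ together with $\sigma$ and $\alpha$. Hence it suffices to conjugate a generic translation $\psi_{(a,b)}$ by each of these three generators and check that the outcome is again a translation.

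First I would dispose of the two easy generators. A short computation gives $\tilde f\psi_{(a,b)}\tilde f^{-1}=\psi_{(f(a),f(b))}$ and $\sigma\psi_{(a,b)}\sigma^{-1}=\psi_{(b,a)}$, both of which lie in $\mathcal{L}$; notably neither computation uses commutativity, the first relying only on the fact that $f$ is a group homomorphism. The substance of the argument is the conjugation by $\alpha$. Using $\alpha^{-1}(x,y)=(x^{-1}y,x^{-1})$ (valid since $\alpha$ has order $3$), one computes step by step
\[
(\alpha\psi_{(a,b)}\alpha^{-1})(x,y)=\alpha\bigl(ax^{-1}y,\,bx^{-1}\bigr)=\bigl(xb^{-1},\,xb^{-1}ax^{-1}y\bigr).
\]
At this stage the map is not visibly a left translation, and this is exactly where the hypothesis enters: assuming $G$ abelian we rewrite $xb^{-1}=b^{-1}x$ and $xb^{-1}ax^{-1}y=b^{-1}ay$, so that
\[
(\alpha\psi_{(a,b)}\alpha^{-1})(x,y)=\bigl(b^{-1}x,\,b^{-1}a\,y\bigr)=\psi_{(b^{-1},\,b^{-1}a)}(x,y),
\]
which again lies in $\mathcal{L}$.

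Having shown that each generator of $\mathcal{G}_0$ conjugates $\mathcal{L}$ into itself, I would conclude that all of $\mathcal{G}_0$ normalises $\mathcal{L}$, and therefore so does all of $\mathcal{G}=\mathcal{L}\mathcal{G}_0$; thus $\mathcal{L}\trianglelefteq\mathcal{G}$. Combined with $\mathcal{G}=\mathcal{L}\mathcal{G}_0$ and $\mathcal{L}\cap\mathcal{G}_0=\{1\}$, this gives $\mathcal{G}\cong\mathcal{L}\rtimes\mathcal{G}_0$. I expect the only delicate point to be the $\alpha$-conjugation: the intermediate term $xb^{-1}ax^{-1}y$ collapses to $b^{-1}ay$ precisely because $G$ is commutative, whereas for non-abelian $G$ the factor $xb^{-1}ax^{-1}$ is a genuine conjugate that retains a dependence on $x$, so the conjugate is no longer a translation. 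This simultaneously confirms that the abelian hypothesis is essential and pinpoints where the care in the computation is needed.
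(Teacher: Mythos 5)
Your proof is correct and takes the route the paper itself intends: the paper dismisses the normality of $\mathcal{L}$ as ``routine to check,'' and your argument is precisely that check, carried out by conjugating a generic translation $\psi_{(a,b)}$ by the generators $\tilde f$, $\sigma$, $\alpha$ of $\mathcal{G}_0$ (available via Corollary \ref{G_01}, since $H\trianglelefteq G$ when $G$ is abelian) and using $\mathcal{G}=\mathcal{L}\mathcal{G}_0$. Your computations are accurate, including the one place where commutativity is genuinely needed, namely $\alpha\psi_{(a,b)}\alpha^{-1}=\psi_{(b^{-1},\,b^{-1}a)}$.
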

	\begin{proof}
		It is routine to check the normality of $\mathcal{L}$ in $\mathcal{G}$.
	\end{proof}
	
	\section{Transitivity of $\Gamma_H(G)$}	
	As $\Gamma_H(G)$ is a Cayley graph, it is vertex-transitive. However it may not be always edge-transitive. In this section, we investigate the necessary and sufficient conditions for edge(arc)-transitivity of $\Gamma_H(G)$. In fact, we prove the following theorem.
	
	\begin{theorem}
		Let  $G$  be a finite group and $H$ be a subgroup of $G$. Then the following are equivalent:
		\begin{enumerate}
			\item $\Gamma_H(G)$ is edge-transitive.
			\item $\Gamma_H(G)$ is arc-transitive.
			\item $H$ is normal in $G$ and $Aut_H(G)$ acts transitively on $G\setminus H$.
		\end{enumerate}
	\end{theorem}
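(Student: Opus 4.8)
The plan is to establish the cycle $(2)\Rightarrow(1)\Rightarrow(3)\Rightarrow(2)$, reading everything off at the stabiliser $\mathcal{G}_0$ of $(e,e)$, which is legitimate because $\Gamma_H(G)$ is vertex-transitive. Since the arcs based at $(e,e)$ are exactly the neighbours $S$, the arc-orbits of $\mathcal{G}$ correspond to the $\mathcal{G}_0$-orbits on $S$; in particular \emph{arc-transitivity is equivalent to $\mathcal{G}_0$ acting transitively on $S$}. For edges I would use that the left translation $\psi_{s^{-1}}\in\mathcal{L}$ satisfies $\psi_{s^{-1}}((e,e))=s^{-1}$ and $\psi_{s^{-1}}(s)=(e,e)$, so it carries the edge $\{(e,e),s\}$ to $\{(e,e),s^{-1}\}$; consequently the $\mathcal{G}$-orbits of edges correspond to the orbits on $S$ of the permutation group generated by the $\mathcal{G}_0$-action together with the involution $s\mapsto s^{-1}$. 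The implication $(2)\Rightarrow(1)$ is immediate, a transitive action on arcs being in particular transitive on edges.

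For $(3)\Rightarrow(2)$ I assume $H\trianglelefteq G$ with $Aut_H(G)$ transitive on $G\setminus H$; by Corollary \ref{G_01}, $\mathcal{G}_0\cong Aut_H(G)\times Sym(3)$ with $Sym(3)=\langle\alpha,\sigma\rangle$. A direct computation gives $\alpha(g,e)=(e,g)$, $\alpha(e,g)=(g^{-1},g^{-1})$, $\alpha(g,g)=(g^{-1},e)$, so $\alpha$ cyclically permutes the three types $S_1\to S_2\to S_3\to S_1$, while $\sigma$ transposes $S_1,S_2$ and fixes $S_3$ setwise; hence $\langle\alpha,\sigma\rangle$ permutes $\{S_1,S_2,S_3\}$ transitively. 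Since $\tilde f(g,e)=(f(g),e)$, $\tilde f(e,g)=(e,f(g))$ and $\tilde f(g,g)=(f(g),f(g))$, the factor $Aut_H(G)$ acts on each $S_i$ exactly as on $G\setminus H$, hence transitively by hypothesis. Transitivity within each type combined with the transitive permutation of the types yields transitivity of $\mathcal{G}_0$ on $S$, i.e. arc-transitivity.

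For $(1)\Rightarrow(3)$ I would first dispose of the non-normal case. If $H$ is not normal, then by Corollary \ref{G_02} we have $\mathcal{G}_0\cong Aut_H(G)\times\mathbb{Z}_2$ with $\mathbb{Z}_2=\langle\sigma\rangle$; both factors fix $S_3$ setwise, and the reversal $s\mapsto s^{-1}$ keeps $(g,g)$ inside $S_3$, so the set $S_3$ is invariant while $S_1\cup S_2$ is its invariant complement. Thus the Type-$3$ edges form a union of $\mathcal{G}$-orbits disjoint from those of Types $1$ and $2$; as both families are non-empty, there are at least two edge-orbits, contradicting edge-transitivity. Hence $H\trianglelefteq G$, and it remains to deduce from edge-transitivity that $Aut_H(G)$ is transitive on $G\setminus H$, which amounts to upgrading the single edge-orbit to a single $\mathcal{G}_0$-orbit on $S$ and then reading off the transitive action of $Aut_H(G)$ on one type.

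The heart of the matter, and the step I expect to be the main obstacle, is precisely this upgrade from edge- to arc-transitivity. Because $\psi_{s^{-1}}$ realises the reversal $s\mapsto s^{-1}$, and since the $\mathcal{G}_0$-orbit of $(g,e)$ meets $S_1$ in exactly the $Aut_H(G)$-orbit of $g$, edge-transitivity yields only that the \emph{larger} group generated by $Aut_H(G)$ and the inversion $g\mapsto g^{-1}$ is transitive on $G\setminus H$, whereas $(3)$ demands transitivity of $Aut_H(G)$ alone. The gap closes automatically when $G$ is abelian, since there $g\mapsto g^{-1}$ is a group automorphism fixing $H$ and so already lies in $Aut_H(G)$. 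In general one must produce an automorphism of $\Gamma_H(G)$ reversing an edge incident to $(e,e)$—equivalently, some $f\in Aut_H(G)$ with $f(g)=g^{-1}$—and it is exactly here that the normality of $H$ and the detailed structure of $\mathcal{G}_0$ obtained in Section \ref{Automorphism-Section} must be exploited to rule out a half-arc-transitive configuration.
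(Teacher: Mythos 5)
Your proposal is not a complete proof, and you have put your finger on exactly the spot where it fails: the implication $(1)\Rightarrow(3)$ is established only up to the normality of $H$, and the passage from ``the group generated by $Aut_H(G)$ together with inversion $g\mapsto g^{-1}$ acts transitively on $G\setminus H$'' to ``$Aut_H(G)$ alone acts transitively on $G\setminus H$'' is left as an acknowledged obstacle. The parts you do prove are sound, and in places cleaner than the paper's (your observation that in the non-normal case $S_3$ is invariant both under $\mathcal{G}_0\cong Aut_H(G)\times\mathbb{Z}_2$ and under the reversal $s\mapsto s^{-1}$, so Type-$3$ edges can never be equivalent to Type-$1$ edges, replaces the paper's two-case computation with the arcs $(e,e)\sim(g,g)$ and $(e,e)\sim(e,ghg^{-1})$). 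But you should know that the paper does not supply the missing step either: in its edge-reversing case it writes $\tau=\psi_{(x,y)}\alpha^i\sigma^j\tilde f$, deduces $f(g_1)=g_2^{-1}$, and then simply asserts ``Hence $Aut_H(G)$ acts transitively on $G\setminus H$.'' That is precisely the non sequitur you anticipated: it shows only that $g_2^{-1}$, not $g_2$, lies in the $Aut_H(G)$-orbit of $g_1$.

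In fact the gap cannot be closed, because $(1)\Rightarrow(3)$ is false. Take $G=F_{21}=\langle a,b \mid a^7=b^3=e,\ bab^{-1}=a^2\rangle$, the Frobenius group of order $21$, and $H=\langle a\rangle$, its normal Sylow $7$-subgroup. Every automorphism of $G$ fixes $H$, so $Aut_H(G)=Aut(G)$; writing $\phi(a)=a^i$, $\phi(b)=a^jb^k$, the defining relation forces $2^k\equiv 2 \pmod 7$, i.e.\ $k=1$, so every automorphism preserves each of the cosets $Hb$ and $Hb^2$. Thus $Aut_H(G)$ has exactly two orbits on $G\setminus H$, namely $Hb$ and $Hb^2$, and they are swapped by inversion ($b^{-1}=b^2$); so $(3)$ fails. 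Yet $\Gamma_H(G)$ is edge-transitive, using only automorphisms the paper itself constructs: the maps $\tilde f$ fuse all edges $\{(e,e),(g,e)\}$ with $g\in Hb$ (inner automorphisms already act transitively on $Hb$); the translation $\psi_{(g,e)}$ carries $\{(e,e),(g^{-1},e)\}$ to $\{(e,e),(g,e)\}$, and since inversion swaps $Hb$ and $Hb^2$ this puts all Type-$1$ edges in one orbit; finally $\sigma$ and $\alpha$ (the latter an automorphism because $H$ is normal) carry Type-$1$ edges onto Type-$2$ and Type-$3$ edges, and vertex-transitivity reduces every edge to one at $(e,e)$. So $(1)$ holds while $(3)$ fails; moreover, granting Corollary \ref{G_01}, $(2)$ fails too, since the $\mathcal{G}_0$-orbit of $(b,e)$ meets $S_1$ only in $Hb\times\{e\}$ — this graph is edge- but not arc-transitive, exactly the half-arc-transitive-type configuration you feared. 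What survives is the equivalence $(2)\Leftrightarrow(3)$, and your reduction proves the correct version of the remaining equivalence: $(1)$ holds if and only if $H$ is normal and $\langle Aut_H(G),\, g\mapsto g^{-1}\rangle$ acts transitively on $G\setminus H$. Your instinct to distrust this step was exactly right; both the theorem and the paper's proof break there.
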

	\begin{proof}
		Let $g_1,g_2 \in G\setminus H$. Consider the arcs $a_1:(e,e) \sim (g_1,e)$, $a_2:(e,e) \sim (g_2,e)$, $a_3:(e,e) \sim (e,g_2)$ and $a_4:(e,e) \sim (g_2,g_2)$. 
		\begin{itemize}
			\item $(1) \implies (3)$.	
			Let $\Gamma_H(G)$ is edge-transitive. At first we want to show $H$ is normal in $G$. If possible let $H$ is not normal in $G$, i.e., there exist $h \in H$, $g \in G \setminus H$ such that $ghg^{-1} \in G \setminus H$. Hence $\mathcal{G}_0\cong   Aut_H(G)\times \mathbb{Z}_2$.  Consider the arcs $a_5:(e,e) \sim (g,g)$, $a_6:(e,e) \sim (e,ghg^{-1})$. As $\Gamma_H(G)$ is edge-transitive, there exists $\varphi \in Aut(\Gamma_H(G))$ such that $\varphi(a_5)=a_6$.
			
			\textbf{Case 1:} Let $\varphi(\overrightarrow{a_5})=\overrightarrow{a_6}$, i.e., $\varphi((e,e))=(e,e)$ and $\varphi((g,g))=(e,ghg^{-1})$. Hence $\varphi \in \mathcal{G}_0$. Let $\varphi= \sigma^j \tilde{f}$, where $f \in Aut_H(G)$, $j\in \{0,1\}$. Hence $\varphi(g,g)=(f(g),f(g))\neq (e,ghg^{-1})$, which is a contradiction. 
			
			\textbf{Case 2:}  Let $\varphi(\overrightarrow{a_5})=\overleftarrow{a_6}$, i.e., $\varphi((e,e))=(e,ghg^{-1})$ and $\varphi((g,g))=(e,e)$. Let $\varphi=\psi_{(a,b)} \sigma^j \tilde{f}$, where $f \in Aut_H(G)$,  $j\in \{0,1\}$. Now  $\psi_{(a,b)} \sigma^j \tilde{f}((e,e))=(e,ghg^{-1})$ implies $a=e$, $b=ghg^{-1}$. Therefore $\psi_{(e,ghg^{-1})} \sigma^j \tilde{f}((g,g))=\psi_{(e,ghg^{-1})}(f(g),f(g))=(f(g), ghg^{-1}f(g)) \neq (e,e)$, which is a contradiction.  Hence $H$ is normal in $G$.
			
			Now we will prove $Aut_H(G)$ acts transitively on $G\setminus H$. As $\Gamma_H(G)$ is edge-transitive, hence there exists $\tau \in Aut(\Gamma_H(G))$ such that $\tau(a_1)=a_2$. 
			
			Let $\tau(\overrightarrow{a_1})=\overrightarrow{a_2}$, i.e., $\tau((e,e))=(e,e)$ and $\tau ((g_1,e))=(g_2,e)$.  Let $\tau=\alpha^i \sigma^j \tilde{f}$, where $f \in Aut_H(G)$, $i\in \{0,1,2\}$, $j\in \{0,1\}$. Hence $\alpha^i \sigma^j \tilde{f}((g_1,e))=(g_2,e)$ implies
			% $\tau=\alpha^2 \sigma \tilde{f}$ or $\tilde{f}$ and
			$f(g_1)=g_2$. As $g_1$, $g_2$ are arbitrary elements from $G\setminus H$, so $Aut_H(G)$ acts transitively on $G \setminus H$.
			
			Let  $\tau(\overrightarrow{a_1})=\overleftarrow{a_2}$, i.e., $\tau((e,e))=(g_2,e)$ and $\tau((g_1,e))=(e,e)$. Let $\tau=\psi_{(x,y)}\alpha^i \sigma^j \tilde{f}$, where $f \in Aut_H(G)$, $i \in \{0,1,2\}$,  $j\in \{0,1\}$. Now $\psi_{(x,y)}\alpha^i \sigma^j \tilde{f}((e,e))=(g_2,e)$ implies $x=g_2$, $y=e$. Therefore $\psi_{(g_2,e)}\alpha^i \sigma^j \tilde{f}((g_1,e))=(e,e)$ implies $f(g_1)=g_2^{-1}$.  Hence  $Aut_H(G)$ acts transitively on $G \setminus H$.   
			
			\item $(3) \implies (2)$.
			As $H$ is normal in $G$,  $\mathcal{G}_0\cong   Aut_H(G) \times Sym(3)$. As $Aut_H(G)$ acts transitively on $G \setminus H$, there exist $f_1, f_2 \in Aut_H(G)$ such that $f_1(g_1)=g_2$ and $f_2(g_1)=g_2^{-1}$.  Hence we can easily check that $\tilde{f_1}(\overrightarrow{a_1})=\overrightarrow{a_2} $,   $\sigma \tilde{f_1}(\overrightarrow{a_1})=\overrightarrow{a_3} $,  $\alpha^2 \tilde{f_2}(\overrightarrow{a_1})=\overrightarrow{a_4}$. Also note that  $\psi_{(g_2,e)}\tilde{f_2}(g_1,e)=(e,e)$ and $\psi_{(g_2,e)}\tilde{f_2}(e,e)=(g_2,e)$, hence $\psi_{(g_2,e)}\tilde{f_2}(\overrightarrow{a_1})=\overleftarrow{a_2}$. Similarly it can be checked that  $\psi_{(e,g_2)}\sigma \tilde{f_2}(\overrightarrow{a_1})=\overleftarrow{a_3}$ and $\psi_{(g_2,g_2)}\alpha^2 \tilde{f_1}(\overrightarrow{a_1})=\overleftarrow{a_4}$. Hence $\Gamma_H(G)$ is arc-transitive.
			
			\item $(2) \implies (1)$.
			This follows from the definition.
		\end{itemize}
	\end{proof}
	\begin{corollary}\label{elem-abelian}
		If $\Gamma_H(G)$ is edge-transitive, then $G/H$ is elementary abelian group.
	\end{corollary}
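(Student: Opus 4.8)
The plan is to read off the two structural consequences of edge-transitivity from the theorem just proved and then push them down to the quotient $G/H$. Since $\Gamma_H(G)$ is edge-transitive, that theorem gives that $H$ is normal in $G$ and that $Aut_H(G)$ acts transitively on $G\setminus H$. Normality makes $G/H$ a genuine group, and every $f\in Aut_H(G)$ descends to a well-defined automorphism $\bar f\in Aut(G/H)$ via $\bar f(gH)=f(g)H$; this is well-defined precisely because $f(H)=H$, and it respects composition, so $\{\bar f : f\in Aut_H(G)\}$ is a subgroup of $Aut(G/H)$.

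Next I would transfer the transitivity to the quotient. The non-identity elements of $G/H$ are exactly the cosets $gH$ with $g\in G\setminus H$. Given two such cosets $g_1H$ and $g_2H$, transitivity of $Aut_H(G)$ on $G\setminus H$ yields an $f\in Aut_H(G)$ with $f(g_1)=g_2$, whence $\bar f(g_1H)=g_2H$. Thus the induced subgroup $\{\bar f : f\in Aut_H(G)\}\leq Aut(G/H)$ acts transitively on the set of non-identity elements of $G/H$.

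It then remains to invoke the classical fact that a finite group $K$ admitting a group of automorphisms transitive on $K\setminus\{e\}$ must be elementary abelian, applied to $K=G/H$. I would include a short justification: transitivity forces all non-identity elements of $K$ to share a common order, since automorphisms preserve order, and a common order on every non-identity element must be a prime $p$ (otherwise a suitable power would exhibit a second, smaller order). Hence $K$ is a $p$-group of exponent $p$. Its centre $Z(K)$ is nontrivial, as $K$ is a nontrivial $p$-group, and is characteristic, hence invariant under every $\bar f$; picking $z\in Z(K)\setminus\{e\}$ and using transitivity to carry $z$ to an arbitrary $g\in K\setminus\{e\}$ shows $g\in Z(K)$, so $K$ is abelian. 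An abelian group of exponent $p$ is elementary abelian, completing the argument.

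The main obstacle is the final step: the characterization of elementary abelian groups through a transitive automorphism action. The delicate points there are the reduction from \emph{common order} to \emph{common prime order} and the use of the characteristic, hence automorphism-invariant, centre to upgrade the $p$-group to an abelian one. By contrast, the transfer of transitivity from $G\setminus H$ to $(G/H)\setminus\{e\}$ is routine once normality is in hand.
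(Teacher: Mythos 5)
Your proof is correct, but it follows a genuinely different route from the paper's. You push everything down to the quotient: the induced automorphisms $\bar{f}(gH)=f(g)H$ act transitively on the non-identity elements of $K=G/H$, and you then prove the classical fact that a finite group admitting a group of automorphisms transitive on its non-identity elements is elementary abelian (common order, which must be a prime $p$; then $K$ is a nontrivial $p$-group whose nontrivial, characteristic, hence invariant centre must absorb every non-identity element). The paper instead stays upstairs in $G$: it argues that all elements of $G\setminus H$ share a common order which must be a prime power $p^k$ (a gcd argument is needed there precisely because a power of $x\in G\setminus H$ may land in $H$), deduces $x^p\in H$ and hence that $G/H$ has exponent $p$; abelianness is then obtained not from the centre but from the commutator subgroup: since $G'$ is characteristic and thus invariant under $Aut_H(G)$, transitivity on $G\setminus H$ forces $G'\subseteq H$ after a case analysis ruling out $G=G'$ and $H\subsetneq G'$, so $G/H$ is abelian. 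Your route is more modular and arguably cleaner: once the transitivity is transferred to $G/H$, where it genuinely covers all non-identity elements, the \emph{common order is prime} step is immediate, whereas the paper must juggle the possibility of powers falling into $H$ (and, as written, its order argument explicitly treats only the case of an order with exactly two prime factors, though it generalizes). Both arguments share the same core mechanism --- invariance of a characteristic subgroup under a transitive automorphism action --- but apply it to different subgroups: $Z(G/H)$ in yours, $G'$ in the paper's.
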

	\begin{proof}
		We start by  assuming that $H$ is normal in $G$ and $Aut_H(G)$ acts transitively on $G\setminus H$. Hence every element of $G \setminus H$ has same order. If possible let $\circ(x)=pq$ for all $x \in G \setminus H$, where $p,q$ are different primes. If $x^p, x^q \in H$, then $x \in H$, which is a contradiction. Therefore either $x^p$ or $x^q \in G \setminus H$. Which is a contradiction as $\circ(x) \neq \circ(x^p)$ and  $\circ(x) \neq \circ(x^q)$. Therefore $\circ(x) \neq pq$. 
		
		Let $\circ(x)=p^k$  for all $x \in G \setminus H$. So $\circ(x^p)=p^{k-1}$ imply $x^p \in H$, i.e., $(xH)^p=H$. Hence every element of $G/H$ has order $p$.
		Let $G'$ denote the commutator subgroup of $G$. 
		
		If possible let $G=G'$, i.e., $G$ is perfect, hence $G/H$ is perfect. As every element of $G/H$ has order $p$, $G/H$ is solvable, hence not perfect, which is a contradiction. Therefore $G' \subsetneq G$. As $G'$ is characteristic subgroup of $G$, $\varphi(G')=G'$ for all $\varphi \in Aut(G)$.
		
		Let $G' \nsubseteq H$, then there exists $g \in G' \setminus H$. If $G \setminus G' \nsubseteq H$, then there exists $g' \in G \setminus (G' \cup H)$. As $Aut_H(G)$ acts transitively on $G\setminus H$, there exists $\varphi \in Aut_H(G)$ such that $\varphi(g)=g' \notin G'$, which is a contradiction.  Hence $G \setminus G' \subseteq H$, i.e., $G=G' \cup H$. Hence either $G' \subseteq H$ or $H \subseteq G'$.
		If $H \subsetneq G'$, then there exist $g \in G'\setminus H$ and $g' \in G \setminus G'$. As $Aut_H(G)$ acts transitively on $G\setminus H$, there exists $\varphi \in Aut_H(G)$ such that $\varphi(g)=g' \notin G'$, which is a contradiction. Hence $G' \subseteq H$. Therefore $G/H$ is abelian. As every element of $G/H$ has order $p$, $G/H$ is elementary abelian group.
	\end{proof}
	
	\section{Conclusion and Open Issues}
	In this paper, we introduced a family of Cayley QSRGs and investigated its automorphism group and other transitivity properties. We conclude the paper by listing below some natural questions which arise from this work and can be of interest to researchers.
	\begin{enumerate}
		\item The converse of Corollary \ref{elem-abelian} is not true, in general. For example, one can check using SAGEMATH \cite{sage} or otherwise  that if  $G=\Bbb{Z}_4 \times \Bbb{Z}_2$ and $H \cong \Bbb{Z}_4$ be the unique cyclic subgroup of $G$ of order $4$, then though $G/H \cong \Bbb{Z}_2$, $\Gamma_H(G)$ is not edge-transitive. So it is natural to ask whether there exists some condition under which the converse holds?
		\item The family of QSRGs constructed in this paper are also Cayley graphs. Is it possible to classify all Cayley graphs which are QSRGs? 
	\end{enumerate}

	\section*{Acknowledgement}
	The first author is supported by the PhD fellowship of CSIR (File no. $08/155$ $(0086)/2020-EMR-I$), Govt. of India. The second author acknowledge the funding of DST-SERB-MATRICS Sanction no. $MTR/2022/000020$, Govt. of India.
	
	%\section*{References}

\end{document}